\theoremstyle{plain}
\newtheorem{theorem}{Theorem}[section]
\newtheorem{lemma}[theorem]{Lemma}
\newtheorem{proposition}[theorem]{Proposition}
\newtheorem{problem}[theorem]{Problem}
\theoremstyle{remark}
\newtheorem{example}[theorem]{Example}
\newtheorem*{acknowledgment}{Acknowledgment}
\numberwithin{equation}{section}
\newcommand{\seclabel}[1]{\label{sec:#1}}   
\newcommand{\thmlabel}[1]{\label{thm:#1}}   
\newcommand{\lemlabel}[1]{\label{lem:#1}}   
\newcommand{\exmlabel}[1]{\label{exm:#1}}   
\newcommand{\eqnlabel}[1]{\label{eqn:#1}}   
\newcommand{\secref}[1]{\ref{sec:#1}}   
\newcommand{\thmref}[1]{\ref{thm:#1}}   
\newcommand{\lemref}[1]{\ref{lem:#1}}   
\newcommand{\exmref}[1]{\ref{exm:#1}}   
\newcommand{\eqnref}[1]{\eqref{eqn:#1}} 
\newcommand{\by}[1]{\overset{\eqnref{#1}}=}  
\newcommand{\byx}[1]{\overset{#1}=}          
\newcommand{\setof}[2]{\{#1\,|\,#2\}}   
\title{Independent Axiom Systems for Nearlattices}
\author{Jo\~{a}o Ara\'{u}jo$^*$}
\author{Michael Kinyon}
\address[Ara\'{u}jo]
{Universidade Aberta
and
Centro de \'{A}lgebra \\
Universidade de Lisboa \\
1649-003 Lisboa \\ Portugal}
\email{\url{jaraujo@ptmat.fc.ul.pt}}
\address[Kinyon]{Department of Mathematics \\
University of Denver \\ 2360 S Gaylord St \\ Denver, Colorado 80208 USA}
\email{\url{mkinyon@math.du.edu}}
\thanks{${}^*$Partially supported by FCT and FEDER,
Project POCTI-ISFL-1-143 of Centro de Algebra da Universidade de Lisboa,
and by FCT and PIDDAC through the project PTDC/MAT/69514/2006.}
\subjclass[2000]{06A12, 06B75}
\keywords{nearlattice, equational base}
\begin{document}

\begin{abstract}
A \emph{nearlattice} is a join semilattice such that every principal
filter is a lattice with respect to the induced order. Hickman and later
Chajda \emph{et al} independently showed that nearlattices can be treated
as varieties of algebras with a ternary operation satisfying certain axioms.
Our main result is that the variety of nearlattices is $2$-based, and we
exhibit an explicit system of two independent identities. We also
show that the original axiom systems of Hickman and of Chajda \emph{et al}
are respectively dependent.
\end{abstract}

\maketitle

\section{Introduction}
\seclabel{intro}

A \emph{nearlattice} $(L,\lor,\{\land_a\}_{a\in L})$ is a join semilattice
$(L,\lor)$ such that for each
$a\in L$, the principal filter $(a] = \setof{x\in L}{a\leq x}$ is a lattice
with respect to the induced order. Each $x,y\in (a]$ has a meet, denoted
by $x\land_a y$. Define $m : L\times L\times L\to L$ by
\begin{equation}
\eqnlabel{ternary-m}
m(x,y,z) = (x\lor z)\land_z (y\lor z)\,,
\end{equation}
for $x,y,z\in L$. Sometimes the literature on nearlattices considers instead
the dual object consisting of a meet semilattice such that each principal
ideal is a lattice.

Hickman \cite{H} and, independently,
Chajda and Hala\v{s} \cite{CH} and Chajda and Kola\v{r}\'{i}k \cite{CK} characterized
nearlattices in terms of the ternary operation $m$. Part (1) of the
following proposition gives Hickman's axioms and part (2) gives the
axioms of Chajda \emph{et al} as found in, for instance, \cite{CHK}, \S{2.6}.

\begin{proposition}
\begin{enumerate}
\item Let $(L,\lor,\{\land_a\}_{a\in L})$ be a nearlattice. Then $m: L\times L\times L\to L$
defined by \eqnref{ternary-m} satisfies the following identities:
\begin{align*}
m(x,x,x) &= x  \tag{H1} \\
m(x,x,y) &= m(y,y,x) \tag{H2} \\
m(m(x,x,y),m(x,x,y),z) &= m(x,x,m(y,y,z)) \tag{H3} \\
m(x,y,z) &= m(y,x,z)  \tag{H4} \\
m(m(x,y,z),m(x,y,z),m(x,x,z)) &= m(x,x,z)  \tag{H5} \\
m(m(x,x,y),z,y) &= m(x,z,y) \tag{H6} \\
m(x,m(x,x,y),z) &= m(x,x,z) \tag{H7} \\
m(m(x,m(y,z,u),u),m(x,m(y,z,u),u),m(x,z,u)) &= m(x,z,u)\,. \tag{H8}
\end{align*}
Conversely, let $(L,m)$ be a ternary algebra satisfying the identities
(P1)--(P8). Define $x\lor y = m(x,x,y)$. Then $(L,\lor)$ is a semilattice.
For each $a\in L$ and all $x,y\in (a]$, define $x \land_a y = m(x,y,a)$.
Then $(L,\lor,\{\land_a\}_{a\in L})$ is a nearlattice.
\item Let $(L,\lor,\{\land_a\}_{a\in L})$ be a nearlattice. Then $m: L\times L\times L\to L$
defined by \eqnref{ternary-m} satisfies the following identities:
\begin{align*}
m(x,y,x) &= x \tag{P1} \\
m(x,x,y) &= m(y,y,x) \tag{P2} \\
m(m(x,x,y),m(x,x,y),z) &= m(x,x,m(y,y,z)) \tag{P3} \\
m(x,y,z) &= m(y,x,z) \tag{P4} \\
m(m(x,y,z),w,z) &= m(x,m(y,w,z),z) \tag{P5} \\
m(x,m(y,y,x),z) &= m(x,x,z) \tag{P6} \\
m(x,x,m(x,y,z)) &= m(x,x,z) \tag{P7} \\
m(m(x,x,z),m(y,y,z),z) &= m(x,y,z)\,. \tag{P8}
\end{align*}
Conversely, let $(L,m)$ be a ternary algebra satisfying the identities
(P1)--(P8). Define $x\lor y = m(x,x,y)$. Then $(L,\lor)$ is a semilattice.
For each $a\in L$ and all $x,y\in (a]$, define $x \land_a y = m(x,y,a)$.
Then $(L,\lor,\{\land_a\}_{a\in L})$ is a nearlattice.
\end{enumerate}
\end{proposition}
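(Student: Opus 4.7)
The plan is to handle both parts together, since the strategies for the Hickman axioms (H1--H8) and the Chajda \emph{et al.} axioms (P1--P8) are completely parallel.

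For the forward direction in either part, I would verify each identity by direct expansion of the defining formula $m(x,y,z) = (x \lor z) \land_z (y \lor z)$. Since $z \leq x \lor z$ and $z \leq y \lor z$ hold in the semilattice $(L,\lor)$, both arguments of $\land_z$ lie in $(z]$ and $m$ is well defined. The recurring simplification
\[
m(x,x,z) = (x \lor z) \land_z (x \lor z) = x \lor z
\]
collapses H1/P1, H2/P2, H3/P3 to idempotence, commutativity, and associativity of $\lor$; H4/P4 becomes commutativity of $\land_z$; and the remaining identities H5--H8 / P5--P8 encode absorption and compatibility between $\lor$ and the various $\land_z$. Each of these reduces to a routine computation in the lattice $(z]$, using the standing observation that $m(x,y,z) \leq x \lor z$ and $m(x,y,z) \leq y \lor z$.

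For the converse, I would first install the semilattice structure by setting $x \lor y := m(x,x,y)$. Idempotence follows from H1, or from P1 by specializing $y = x$. Commutativity is H2/P2, and associativity is exactly the content of H3/P3 after applying the same collapse used in the forward direction. This endows $L$ with the order $x \leq y \iff x \lor y = y$. Next, for each $a$ I would define $x \land_a y := m(x,y,a)$ on $(a]$ and verify it is the meet in $(a]$: commutativity of $\land_a$ is H4/P4; closure into $(a]$ together with the lower-bound properties $m(x,y,a) \leq x, y$ should fall out of the absorption-style axioms H6/P6 and H7/P7 once rewritten in terms of $\lor$; and the greatest-lower-bound (universality) property is extracted from H5/P5 together with H8/P8. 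Finally, the check that the ternary operation $m'(x,y,z) := (x \lor z) \land_z (y \lor z)$ built from the reconstructed nearlattice recovers the original $m$ reduces, after unwinding, to $m(x,y,z) = m(m(x,x,z), m(y,y,z), z)$, which is P8 directly, and in the Hickman setting is derivable from H5, H8, and what has already been established.

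The main obstacle will be the principal-filter step: extracting from the nontrivial axioms H5--H8 (respectively P5--P8) both the lower-bound relations for $m(x,y,a)$ and the universality among common lower bounds of $x$ and $y$ in $(a]$. The semilattice portion is immediate from the first three axioms, and commutativity of $\land_a$ is immediate from the fourth, so the real technical content lies in showing that a ternary operation satisfying these absorption-style identities simultaneously witnesses the meet operation in every principal filter. The intricate identities are crafted precisely for this purpose, but pulling out the required inequalities will demand a careful sequence of substitutions combining several axioms at once.
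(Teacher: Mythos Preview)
The paper does not supply its own proof of this proposition; it is quoted as background from Hickman \cite{H} and from Chajda, Hala\v{s} and Kola\v{r}\'{i}k \cite{CH,CK,CHK}, and the paper's own work begins with Theorems~\thmref{2base}--\thmref{CH}. So there is no in-paper argument against which to compare your proposal.

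On its own merits, your outline follows the standard route taken in the cited sources and is sound at the level of a sketch. Two small corrections: in the forward direction, reducing (P1) to ``idempotence of $\lor$'' is a little glib, since $m(x,y,x) = x \land_x (y\lor x)$ also uses that $x$ is the least element of the filter $(x]$; and in the converse direction you will need, in addition to the lower-bound inequalities $m(x,y,a)\le x,y$, the closure fact $a \le m(x,y,a)$, which in the $\mathcal{P}$ system comes most naturally from (P8) (or the derived identity $m(m(x,y,z),m(x,y,z),z)=m(x,y,z)$, cf.\ \eqnref{h5-9} in the paper) rather than from (P6)/(P7) alone. Your identification of the principal-filter step as the crux is correct, and your closing observation that the compatibility condition $m(x,y,z)=m(m(x,x,z),m(y,y,z),z)$ is literally (P8) is exactly what makes the two passages mutually inverse. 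Finally, note an apparent typo in the statement itself: the converse clause in part~(1) refers to (P1)--(P8) where (H1)--(H8) is clearly intended.
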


(In fact, Hickman worked with the dual notion of nearlattice, and used a different
convention for ordering the variables. Our $m(x,y,z)$ is Hickman's $j(x,z,y)$.)

Thus nearlattices can be treated as varieties of algebras, and from now on, we
will refer to the ternary structures $(L,m)$ themselves as nearlattices.
The main result of this paper is that the variety of nearlattices $(L,m)$ is $2$-based.

\begin{theorem}
\thmlabel{2base}
The following is a basis of identities for nearlattices.
\begin{align*}
m(x,y,x) &= x \tag{N1} \\
m(m(x,y,z),m(y,m(u,x,z),z),w) &= m(w,w,m(y,m(x,u,z),z))\,. \tag{N2}
\end{align*}
\end{theorem}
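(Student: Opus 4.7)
The plan is to prove soundness (every nearlattice satisfies (N1) and (N2)) and then completeness (every ternary algebra satisfying (N1) and (N2) also satisfies the Chajda \emph{et al.}\ axioms (P1)--(P8), which by Proposition 1.1 present the variety of nearlattices).

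For soundness, I work with the formula $m(x,y,z) = (x\lor z)\land_z (y\lor z)$ in a nearlattice. Identity (N1) reduces immediately: $m(x,y,x) = x \land_x (y\lor x) = x$, since $x$ is the least element of the principal filter $(x]$. For (N2), first note that $m(a,a,b) = a\lor b$, so the right-hand side equals $w\lor m(y,m(x,u,z),z)$. Since $m(x,u,z)\in (z]$, expanding gives $m(y,m(x,u,z),z) = (y\lor z)\land_z (x\lor z)\land_z (u\lor z)$; by commutativity of $\land_z$ this equals the second argument $m(y,m(u,x,z),z)$ appearing on the left-hand side. Calling this common value $A$ and observing that $m(x,y,z)\geq A$ in $(z]$, the absorption law in the filter $(w]$ collapses $m(m(x,y,z),A,w) = (m(x,y,z)\lor w)\land_w (A\lor w)$ to $A\lor w$, which matches the right-hand side.

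For completeness I must derive each of (P1)--(P8) from (N1) and (N2) alone. (P1) is identical to (N1), so no work is needed there. For the remainder, my strategy is to first extract simpler consequences of (N2) through strategic substitutions. For instance, setting $w = m(x,y,z)$ on the left and applying (N1) forces $m(x,y,z) = m(m(x,y,z),m(x,y,z),m(y,m(x,u,z),z))$, which already relates $m(-,-,z)$ to the induced join $x\lor y := m(x,x,y)$. From such consequences I would aim to recover the commutativities (P2) and (P4) first, then the absorption identities (P6) and (P7), and finally the more delicate (P3), (P5), and (P8), ideally in an order in which each derived identity feeds immediately into the next.

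The main obstacle is clearly the completeness half. Because (N2) has four variables and eight nested $m$-occurrences, the space of substitution instances is enormous and a purely manual search for the right derivation chain is impractical. The realistic approach is to feed (N1) and (N2) into an automated theorem prover such as Prover9, with each (Pi) as a target, and then transcribe the machine-generated proofs into a human-verifiable sequence of equational steps. The mathematical content of the completeness half thereby reduces to a list of derived identities, each obtained from earlier ones by a single substitution, culminating in (P1)--(P8); the difficulty lies in discovering this sequence, not in verifying it once it is in hand.
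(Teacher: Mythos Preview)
Your soundness argument is correct and, in fact, somewhat more elementary than the paper's: rather than invoking previously derived identities such as (H4), (H8') and (P6), you verify (N2) directly from the formula $m(x,y,z)=(x\lor z)\land_z(y\lor z)$ and the lattice structure of the filter $(w]$. The observation that both inner terms collapse to the symmetric triple meet $(x\lor z)\land_z(y\lor z)\land_z(u\lor z)$, and that absorption then finishes the job, is a clean semantic proof.

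The completeness direction, however, is not a proof but a proposal to find one. You have identified one genuinely useful substitution---setting $w=m(x,y,z)$ in (N2) and applying (N1) yields
\[
m(x,y,z)=m(m(x,y,z),m(x,y,z),m(y,m(x,u,z),z)),
\]
which is essentially (H8') up to an application of (H4)---but beyond that you only describe an intention to run Prover9 and transcribe the output. That is a research plan, not a derivation; as written, the argument has a hole exactly where the hard content lies. The paper fills this hole with explicit equational chains: it first obtains (H2) and two auxiliary identities by specializing $z=y$ and $z=x$ in (N2), then builds up a short list of lemmas (equations (4.3)--(4.7)) culminating in (H4), and finally recovers (H7) and (H8'). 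None of these steps is individually deep, but the sequence has to be exhibited, and you have not done so.

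One strategic remark: the paper does not target the seven independent Chajda axioms (P1), (P2), (P4)--(P8) as you propose, but the five independent Hickman axioms (H1), (H2), (H4), (H7), (H8). Since (H1) is an instance of (N1) and you have essentially already found (H8'), only (H2), (H4) and (H7) remain, which makes the Hickman target noticeably more economical than yours.
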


Left open in both the investigations of Hickman and of Chajda \emph{et al}
was the independence of their respective axiom systems.
In fact, three of Hickman's axioms are dependent upon the others,
and one of the axioms of Chajda \emph{et al} is dependent upon the rest.

\begin{theorem}
\thmlabel{hickman}
The system
$\mathcal{H} = \{$\emph{(H1)}, \emph{(H2)}, \emph{(H4)}, \emph{(H7)}, \emph{(H8)}$\}$
is a basis of independent identities for the variety of nearlattices. In particular,
the identities $\mathcal{H}$ imply \emph{(H3)}, \emph{(H5)} and \emph{(H6)}.
\end{theorem}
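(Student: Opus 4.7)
The plan is to prove Theorem \thmref{hickman} in two halves: a \emph{derivability} direction and an \emph{independence} direction. For derivability, it suffices to show that the five identities in $\mathcal{H}$ together imply (H3), (H5), and (H6), because by the proposition stated above the full system (H1)--(H8) characterizes nearlattices, so once $\mathcal{H}$ is known to entail all of (H1)--(H8) it is automatically a basis for the variety. For independence, it suffices to exhibit, for each $\sigma \in \mathcal{H}$, a single ternary algebra $(L_\sigma,m)$ that satisfies every identity in $\mathcal{H} \setminus \{\sigma\}$ but fails $\sigma$.

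For the derivability half, I would proceed by purely equational manipulation starting from the commutativity axioms (H2) and (H4), which already give strong symmetry: together they say that $m(x,x,y)$ is a commutative operation and that $m$ is symmetric in its first two arguments. Setting $x \lor y := m(x,x,y)$, the first task is to extract enough structure of $\lor$ to mimic a semilattice. (H1) gives $m(x,y,x)=x$, which specialized to $y=x$ yields idempotence $m(x,x,x)=x$; combined with (H2) this produces $x \lor x = x$ and $x \lor y = y \lor x$. The harder step is associativity of $\lor$, which in Hickman's system is essentially the content of (H3); to derive it from $\mathcal{H}$ I would use (H7) to absorb terms of the form $m(x, m(x,x,y), z)$ down to $m(x,x,z)$, and then use (H8), the long identity, to regulate the interaction of $m(\cdot,\cdot,u)$ with nested $m$-expressions, rewriting one side of (H3) to the other. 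From (H3) one obtains (H5) by specializing (H8) and applying (H7) repeatedly, and (H6) follows from (H5) together with (H4) and the absorption provided by (H7) applied to the first argument. In practice these derivations are long chains of substitutions and are natural candidates for verification by an automated theorem prover such as Prover9; I would search with Prover9 to find an explicit proof and then transcribe the key intermediate lemmas into the paper.

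For the independence half, I would seek five small finite ternary algebras, one per axiom, using a finite model builder such as Mace4. Experience with such problems suggests that models of size $2$, $3$, or $4$ typically suffice. For each $\sigma \in \mathcal{H}$, Mace4 is invoked with the other four identities as axioms and the negation of $\sigma$ as a goal, and any returned model is an independence witness. These tables would then be included in the paper, with the reader able to verify directly (or with a computer algebra system) that each table satisfies the four required identities and violates the fifth.

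The main obstacle will be the derivation of (H3): it is the associativity of the induced join and it does not drop out of a one- or two-step manipulation. The interplay of (H7) (an absorption law) and (H8) (a long and asymmetric identity) is the only source of genuine ``rewriting power'' available once (H1), (H2), (H4) have been used for symmetry, and finding the right substitutions into (H8) is the delicate point. Once (H3) is in hand, deriving (H5) and (H6) from $\mathcal{H} \cup \{\text{(H3)}\}$ is comparatively routine, as they can be viewed as consequences of (H8) and (H7) applied inside an already-associative join. The independence half, by contrast, reduces to a finite model search and is expected to be straightforward.
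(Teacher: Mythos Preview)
Your overall strategy matches the paper's: establish derivability by equational manipulation (aided by \textsc{Prover9}) and independence via small \textsc{Mace4} models. Two points need correction, however.

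First, you misstate (H1). In Hickman's system (H1) is $m(x,x,x)=x$, not $m(x,y,x)=x$. The latter is (P1) in the Chajda system and is \emph{not} an axiom of $\mathcal{H}$; the paper has to derive it, and doing so takes a nontrivial chain of substitutions into (H8) (rewritten via (H2), (H4) into an equivalent form (H8${}'$)). Your remark that idempotence of $\lor$ follows ``by specializing $m(x,y,x)=x$ to $y=x$'' therefore has the direction of implication backwards, and the first genuinely delicate step in the derivability argument is one you have treated as free.

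Second, your proposed order of derivation is the reverse of the paper's. You plan to attack (H3) first and then read off (H5) and (H6). The paper instead proves (H5) directly from (H8${}'$) together with the preliminary identities $m(x,y,x)=x$ and $m(x,y,y)=y$, then proves (H6) using (H5), and only then proves (H3) using both (H5) and (H6) as lemmas. In particular (H5) does not need (H3), and the route through (H5) and (H6) is exactly what supplies the rewriting power you correctly identify as the crux of obtaining (H3). Your instinct that (H3) is the hardest of the three is right, but that is why it comes last, not first; your sketched dependency (H3) $\Rightarrow$ (H5) $\Rightarrow$ (H6) is inverted. A \textsc{Prover9} search would likely rediscover the paper's order, but the hand-written plan as stated does not reflect a workable derivation.

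The independence half of your proposal is fine and matches the paper exactly: \textsc{Mace4} models of sizes $2$ and $3$ suffice for all five axioms.
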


\begin{theorem}
\thmlabel{CH}
The system
$\mathcal{C} = \{$\emph{(P1)}, \emph{(P2)}, \emph{(P4)}--\emph{(P8)}$\}$
is a basis of independent identities for the variety of nearlattices. In particular,
the identities $\mathcal{C}$ imply \emph{(P3)}.
\end{theorem}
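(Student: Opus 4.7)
The plan has two parts: (a) derive (P3) from $\mathcal{C}$, which together with part (2) of Proposition~1.1 establishes that $\mathcal{C}$ is a basis for the variety of nearlattices; and (b) for each identity in $\mathcal{C}$, exhibit a ternary algebra satisfying the other six identities of $\mathcal{C}$ but violating the chosen one, thereby establishing independence.

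For (a), writing $x \vee y := m(x,x,y)$, identity (P3) asserts the associativity of $\vee$. Several of the given axioms admit suggestive reformulations that I expect to use: via (P4), axiom (P6) becomes $m(x, x \vee y, z) = x \vee z$; axiom (P7) reads $x \vee m(x,y,z) = x \vee z$; and (P8) reads $m(x \vee z, y \vee z, z) = m(x,y,z)$. The exchange identity (P5) is the main workhorse, permitting a subterm to be passed through the middle argument of $m$. My approach would be to search for an explicit equational derivation using an automated theorem prover such as Prover9; the expected output is a sequence of substitutions transforming $(x \vee y) \vee z = m(m(x,x,y), m(x,x,y), z)$ into $x \vee (y \vee z) = m(x,x,m(y,y,z))$ via (P5)--(P8), modulo commutativities from (P2) and (P4). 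As an auxiliary sanity check, I would confirm that $\mathcal{C}$ derives (N1) and (N2), and then appeal to Theorem~\thmref{2base}; this gives an alternative route should the direct derivation prove stubborn.

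For (b), for each of the seven axioms in $\mathcal{C}$ I would produce a small finite ternary algebra $(L,m)$ satisfying the other six but not the designated one. These models can be located systematically with a finite model finder such as Mace4; in practice they are small, typically of order at most $4$, and can be displayed as ternary operation tables whose compliance with the remaining axioms is a mechanical verification.

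The principal obstacle is (a): an equational derivation of the associativity of $\vee$ from the remaining identities is likely to require several intermediate lemmas and resists pencil-and-paper discovery without computer assistance. By contrast, once a derivation is in hand, its verification and the construction of the seven independence witnesses for (b) reduce to routine symbolic and finite checks.
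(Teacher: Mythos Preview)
Your proposal is correct and mirrors the paper's own methodology: the paper also obtains the derivation of (P3) via automated reasoning (\textsc{Prover9}) and finds the seven independence witnesses with \textsc{Mace4}, presenting explicit operation tables on sets of size at most five. One point worth noting is that the paper's derivation of (P3) is shorter than you anticipate: it uses only (P2), (P4), (P6) and (P7), not (P5) or (P8). The key intermediate identity is $m(m(x,y,z),m(x,x,z),y)=m(y,y,z)$, obtained from (P7), (P6), (P2), (P4) in four steps; from this one derives $m(x,x,m(y,y,z))=m(m(y,y,z),m(y,y,z),m(x,x,y))$, and a second application of the same identity (after a (P2)-swap) yields (P3). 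Your alternative route through (N1)--(N2) and Theorem~\thmref{2base} would also work, since (P3) coincides with (H3), but it is considerably longer than the direct four-axiom derivation.
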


In \S\secref{hickman}, \S\secref{CH} and \S\secref{main}, we prove Theorems
\thmref{hickman}, \thmref{CH} and \thmref{2base}, respectively. We conclude in
\S\secref{problems} with some open problems.

\section{Hickman's Axioms}
\seclabel{hickman}

In this section we prove Theorem \thmref{hickman}. Assume now
that $(A,m)$ is an algebra satisfying (H1), (H2), (H4), (H7) and (H8).

The identity (H8) is taken directly from Hickman's paper \cite{H} after appropriate
change of notation. There is, however, a somewhat more useful equivalent form.

\begin{lemma}
\lemlabel{h8p}
In the presence of (H2) and (H4), the identity (H8) is equivalent to
\[
m(m(x,y,z),m(x,y,z),m(x,m(y,u,z),z)) = m(x,y,z)\,. \tag{H8'}
\]
\end{lemma}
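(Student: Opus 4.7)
The plan is to exploit the fact that both (H8) and (H8') have the overall shape $m(A,A,B)=B$, where one of $A,B$ is a copy of $m(x,z,u)$ (respectively $m(x,y,z)$) and the other is the ``twisted'' term $m(x,m(\cdot,\cdot,\cdot),\cdot)$. Because (H2) lets one swap the doubled entry with the singleton entry in any term $m(p,p,q)$, it should rewrite (H8) into an identity whose overall shape already closely matches that of (H8'); the remaining gap should be closable by a rename of variables and one use of (H4) inside the triply-nested subterm.

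Concretely, I would first apply (H2) to the left-hand side of (H8) to obtain the equivalent identity
\[
m(m(x,z,u),\,m(x,z,u),\,m(x,m(y,z,u),u)) = m(x,z,u),
\]
and then apply the simultaneous renaming $y\mapsto u$, $z\mapsto y$, $u\mapsto z$ to reach
\[
m(m(x,y,z),\,m(x,y,z),\,m(x,m(u,y,z),z)) = m(x,y,z).
\]
A single use of (H4) on the innermost subterm, $m(u,y,z)=m(y,u,z)$, then yields (H8') exactly. For the converse direction I would run the same three moves in reverse order: apply (H4) to $m(y,u,z)$ inside (H8'), perform the inverse renaming, and finally apply (H2) to swap the doubled and singleton entries back to produce (H8).

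The only point requiring care is the bookkeeping in the variable renaming, since $x,y,z,u$ play distinct rôles in the two identities and the substitution is a cyclic permutation of three of the four variables; an inadvertent clash between the new name of $z$ and the existing $y$ would collapse the statement. Beyond this, the reduction uses only (H2) and (H4) together with $\alpha$-renaming, so there is no substantive mathematical obstacle — the lemma is essentially a notational repackaging of (H8).
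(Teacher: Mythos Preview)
Your proposal is correct and follows essentially the same route as the paper: both arguments amount to one application of (H2) to swap the doubled and singleton entries, one application of (H4) on the innermost ternary subterm, and a bijective renaming of the variables $y,z,u$. The paper runs the transformation starting from (H8') toward (H8) and leaves the renaming implicit, while you start from (H8) and make the renaming explicit, but the content is the same.
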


\begin{proof}
Starting with the left of (H8'), we have
\begin{align*}
& m(m(x,y,z),m(x,y,z),m(x,m(y,u,z),z))\\
 &\byx{(H4)} m(m(x,y,z),m(x,y,z),m(x,m(u,y,z),z)) \\
&\byx{(H2)} m(m(x,m(u,y,z),z),m(x,m(u,y,z),z),m(x,y,z))\,,
\end{align*}
which is the left side of (H8). Since the right sides of (H8) and (H8')
coincide, this proves the desired equivalence.
\end{proof}

Next we need a pair of useful identities.

\begin{lemma}
For all $x,y$,
\begin{align}
m(x,y,x) &= x \eqnlabel{xyxx} \\
m(x,y,y) &= y \eqnlabel{xyyy}
\end{align}
\end{lemma}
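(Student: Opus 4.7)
The plan is to first prove \eqnref{xyxx}, since \eqnref{xyyy} will then follow by a single application of (H4): $m(x,y,y) = m(y,x,y)$ is precisely \eqnref{xyxx} with $x$ and $y$ swapped. So the work is to establish $m(x,y,x) = x$.

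For the base case I will exploit (H7). Setting $z = x$ and using (H1) gives $m(x, m(x,x,y), x) = m(x,x,x) = x$, so the identity holds whenever the middle argument has the ``join'' form $m(x,x,y)$. A useful variant is obtained by applying (H2) to the inner subterm of (H7), yielding $m(x, m(y,y,x), z) = m(x,x,z)$; at $z = x$ this reads $m(x, m(y,y,x), x) = x$. These special cases will serve as the anchors for the general argument.

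To extend the identity to an arbitrary middle argument, I will specialize (H8) with $z = y$ and $u = x$; the inner subterm then has the shape $m(x, m(y,y,x), x)$, which collapses to $x$ by the variant above. What remains is
\[
m(x, x, m(x,y,x)) = m(x,y,x),
\]
which is the semilattice inequality $x \leq m(x,y,x)$ (with $a \vee b := m(a,a,b)$). Writing $p := m(x,y,x)$, this says $p$ itself has ``join'' form: $p = m(x,x,p)$. Substituting $y \mapsto p$ in the base case then yields $m(x, m(x,x,p), x) = x$, and rewriting the inner $m(x,x,p)$ back as $p$ gives $m(x, p, x) = x$, that is, $m(x, m(x,y,x), x) = x$.

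The main obstacle will be bridging from $m(x, m(x,y,x), x) = x$ to the target $m(x,y,x) = x$ itself. My plan is to attack this with another specialization of (H8) --- most promisingly at $z = u = x$ --- combined with the key identity $p = m(x,x,p)$ and careful applications of (H2) and (H4) to reshape the nested composites and force the final collapse $p = x$, after which \eqnref{xyyy} follows by (H4) as noted at the outset.
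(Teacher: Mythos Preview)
Your outline tracks the paper's proof closely through the first two milestones: you correctly derive $m(x,x,m(x,y,x)) = m(x,y,x)$ and then $m(x,m(x,y,x),x) = x$, and both your derivations are valid (and essentially those of the paper). The final deduction of \eqnref{xyyy} from \eqnref{xyxx} via (H4) is also fine.

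The gap is the last step, where you promise but do not deliver. Your suggested specialization of (H8) at $z = u = x$ collapses to a tautology: after (H1) and (H4) it reads $m(m(x,p,x),m(x,p,x),x) = x$ with $p = m(x,y,x)$, and since you already know $m(x,p,x) = x$, this is just $m(x,x,x) = x$. So no progress is made, and ``careful applications of (H2) and (H4)'' will not conjure the missing content. What the paper actually does is prove a third auxiliary identity,
\[
m\bigl(x,x,\,m(m(x,y,x),m(x,z,x),x)\bigr) = x,
\]
obtained by the non-obvious substitution $z \mapsto y$, $x \mapsto m(y,v,y)$ in (H8') together with $m(x,m(x,y,x),x) = x$. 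Only with this in hand does the chain
\[
m(x,y,x) = m(x,x,m(x,y,x)) = m(x,x,m(x,x,m(x,y,x))) = m\bigl(x,x,m(m(x,y,x),m(x,y,x),x)\bigr) = x
\]
go through. You need to find and prove this (or an equivalent) intermediate identity; the substitution pattern is not one that your stated plan would naturally produce.
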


\begin{proof}

First, we have
\begin{align*}
m(x,x,m(x,y,x)) &\byx{(H2)} m(m(x,y,x),m(x,y,x),\underbrace{x}) \\
&\byx{(H1)} m(m(x,y,x),m(x,y,x),m(x,x,x)) \\
&\byx{(H7)} m(m(x,y,x),m(x,y,x),m(x,m(x,x,y),x)) \\
&\byx{(H2)} m(m(x,y,x),m(x,y,x),m(x,m(y,y,x),x)) \\
&\byx{(H8')} m(x,y,x)\,,
\end{align*}
that is,
\begin{equation}
\eqnlabel{xyxtmp1}
m(x,x,m(x,y,x)) = m(x,y,x)\,.
\end{equation}

Next,
\begin{align*}
m(x,m(x,y,x),x) &\by{xyxtmp1} m(\underbrace{x},\underbrace{x},m(x,m(x,y,x),x)) \\
&\byx{(H1)} m(m(x,x,x),m(x,x,x),m(x,m(x,y,x),x)) \\
&\byx{(H8')} m(x,x,x) \\
&\byx{(H1)} x\,,
\end{align*}
that is,
\begin{equation}
\eqnlabel{xyxtmp2}
m(x,m(x,y,x),x) = x\,.
\end{equation}

In (H8'), take $z = y$ and $x = m(y,v,y)$. The left side of (H8') becomes
\begin{align*}
&m(\underbrace{m(m(y,v,y),y,y)},\underbrace{m(m(y,v,y),y,y)},m(m(y,v,y),m(y,u,y),y)) \\
&\byx{(H4)}
m(\underbrace{m(y,m(y,v,y),y)},\underbrace{m(y,m(y,v,y),y)},m(m(y,v,y),m(y,u,y),y)) \\
&\by{xyxtmp2} m(y,y,m(m(y,v,y),m(y,u,y),y))\,.
\end{align*}
The right side of (H8') becomes
\[
m(m(y,v,y),y,y) \byx{(H4)} m(y,m(y,v,y),y) \by{xyxtmp2} y\,.
\]
Putting this together, replace $y$ with $x$, $v$ with $y$ and $u$ with $z$ to get
\begin{equation}
\eqnlabel{xyxtmp3}
m(x,x,m(m(x,y,x),m(x,z,x),x)) = x\,.
\end{equation}

Now we can verify \eqnref{xyxx}:
\begin{align*}
m(x,y,x) &\by{xyxtmp1} m(x,x,\underbrace{m(x,y,x)}) \\
&\by{xyxtmp1} m(x,x,\underbrace{m(x,x,m(x,y,x))}) \\
&\byx{(H2)} m(x,x,m(m(x,y,x),m(x,y,x),x)) \\
&\by{xyxtmp3} x\,.
\end{align*}

Finally, \eqnref{xyyy} follows easily: $m(x,y,y) \byx{(H4)} m(y,x,y) \by{xyxx} y$.
\end{proof}

Next we work toward (H5). Key to this are the following identities.

\begin{lemma}
For all $x,y,z$,
\begin{align}
m(m(x,y,z),m(x,y,z),z) &= m(x,y,z) \eqnlabel{h5-9} \\
m(x,m(x,y,z),z) &= m(x,y,z) \eqnlabel{h5-12}
\end{align}
\end{lemma}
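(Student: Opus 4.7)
The plan is to derive both identities from (H8') via carefully chosen substitutions, combined with the auxiliary identity \eqnref{xyyy} and the axioms (H2), (H4).

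For \eqnref{h5-9}, I would simply substitute $u = z$ in (H8'). Two successive applications of \eqnref{xyyy} collapse $m(y,z,z)$ to $z$ and then $m(x,z,z)$ to $z$, so the third argument of the outer $m$ in (H8') degenerates to $z$, and (H8') becomes exactly \eqnref{h5-9}. This is essentially a one-line calculation.

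For \eqnref{h5-12}, the strategy is to apply (H8') twice with different variable renamings to produce two distinct evaluations of the same expression, which are then forced to coincide. Write $w := m(x,y,z)$ and $v := m(x, w, z) = m(x, m(x,y,z), z)$; the goal is to show $v = w$. A first application of (H8') with $u = x$, together with (H4) to recognize $m(y,x,z) = w$, gives $m(w, w, v) = w$; applying (H2) then yields $m(v, v, w) = w$. A second application of (H8') with the substitution $x \to w$, $y \to x$, $u \to y$ produces an equation whose inner term $m(w, m(x,y,z), z)$ simplifies to $m(w, w, z)$, which by the freshly proved \eqnref{h5-9} collapses to $w$, and whose outer $m(w, x, z)$ equals $v$ by (H4); the equation thus reduces to $m(v, v, w) = v$. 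Comparing the two evaluations of $m(v,v,w)$ forces $v = w$, which is \eqnref{h5-12}.

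The main obstacle I expect is spotting the second substitution: once \eqnref{h5-9} is available, the trick is to feed $w$ itself into the outer $x$-slot of (H8'), which creates the internal cancellation $m(w, w, z) = w$ needed to generate the second evaluation of $m(v, v, w)$. Once this substitution is found, the remainder is routine bookkeeping with (H2) and (H4).
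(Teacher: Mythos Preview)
Your proposal is correct and matches the paper's approach essentially step for step. For \eqnref{h5-9} the paper does exactly your substitution $u=z$ in (H8') followed by two uses of \eqnref{xyyy}; for \eqnref{h5-12} the paper uses the same two instances of (H8') you identify (namely $u=x$ in the original variables, and $x\mapsto m(x,y,z),\,y\mapsto x,\,u\mapsto y$), linked by (H2), (H4) and the freshly proved \eqnref{h5-9}, presenting them as a single equational chain $v=m(v,v,w)=m(w,w,v)=w$ rather than as two separate evaluations of $m(v,v,w)$.
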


\begin{proof}
For \eqnref{h5-9}, we have
\begin{align*}
m(m(x,y,z),m(x,y,z),\underbrace{z}) &\by{xyyy} m(m(x,y,z),m(x,y,z),m(x,\underbrace{z},z)) \\
&\by{xyyy} m(m(x,y,z),m(x,y,z),m(x,m(y,z,z),z)) \\
&\byx{(H8')} m(x,y,z)\,.
\end{align*}

Then we compute
\begin{align*}
m(x,m(x,y,z),z) &\byx{(H4)} m(m(x,y,z),x,z) \\
&\byx{(H8')} m(m(m(x,y,z),x,z),m(m(x,y,z),x,z),m(m(x,y,z),m(x,y,z),z)) \\
&\by{h5-9} m(m(m(x,y,z),x,z),m(m(x,y,z),x,z),m(x,y,z)) \\
&\byx{(H2)} m(m(x,y,z),m(x,y,z),\underbrace{m(m(x,y,z),x,z)}) \\
&\byx{(H4)} m(m(x,y,z),m(x,y,z),m(x,\underbrace{m(x,y,z)},z)) \\
&\byx{(H4)} m(m(x,y,z),m(x,y,z),m(x,m(y,x,z),z)) \\
&\byx{(H8')} m(x,y,z)\,.
\end{align*}
This establishes \eqnref{h5-12}.
\end{proof}

\begin{lemma}
\lemlabel{H5}
If $(A,m)$ is an algebra satisfying (H1), (H2), (H4), (H7) and (H8),
then (H5) holds.
\end{lemma}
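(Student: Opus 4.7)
The plan is to derive (H5) directly from (H8') with a single well-chosen specialization, followed by the application of the inner-simplification identity \eqref{h5-12} and the commutativity-of-join identity (H2).

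Concretely, recall (H8') from Lemma \lemref{h8p}:
\[
m(m(x,y,z),m(x,y,z),m(x,m(y,u,z),z)) = m(x,y,z).
\]
The first step is to substitute $y := x$. This is the key move, because the right-hand side becomes $m(x,x,z)$, which is exactly the term appearing on the right of (H5). After this substitution the inner expression is $m(x,m(x,u,z),z)$, which by \eqref{h5-12} collapses to $m(x,u,z)$. Thus one obtains
\[
m(m(x,x,z),m(x,x,z),m(x,u,z)) = m(x,x,z).
\]
Finally, apply (H2) with $A = m(x,x,z)$ and $B = m(x,u,z)$ to swap the outer and inner occurrences, then rename $u$ to $y$; this produces
\[
m(m(x,y,z),m(x,y,z),m(x,x,z)) = m(x,x,z),
\]
which is (H5).

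There is no real obstacle here: all the groundwork was done in the preceding lemmas, and (H8') was precisely reformulated in Lemma \lemref{h8p} to make a substitution of this form natural. The only ``creative'' step is noticing that setting $y := x$ in (H8') forces the right-hand side into the shape $m(x,x,z)$ demanded by (H5), after which \eqref{h5-12} and (H2) finish the argument mechanically.
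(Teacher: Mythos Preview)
Your proof is correct and is essentially the same as the paper's: both use (H8') specialized at $y:=x$, the identity \eqnref{h5-12}, and (H2). The only difference is presentational—the paper starts from the left side of (H5) and works forward, while you start from (H8') and specialize; the three ingredients and their roles are identical.
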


\begin{proof}
We compute
\begin{align*}
m(m(x,y,z),m(x,y,z),m(x,x,z)) &\byx{(H2)} m(m(x,x,z),m(x,x,z),\underbrace{m(x,y,z)}) \\
&\by{h5-12} m(m(x,x,z),m(x,x,z),m(x,m(x,y,z),z)) \\
&\byx{(H8')} m(x,x,z)\,,
\end{align*}
which proves the desired result.
\end{proof}

We continue with the assumptions of this section that we have an
algebra $(A,m)$ satisfying (H1), (H2), (H4), (H7) and (H8).
By Lemma \lemref{H5}, we may now freely use (H5).
Our next goal is to establish (H6).

\begin{lemma}
For all $x,y,z$,
\begin{equation}
\eqnlabel{h6-17}
m(m(x,x,y),m(z,x,y),y) = m(z,x,y)\,.
\end{equation}
\end{lemma}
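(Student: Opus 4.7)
The plan is to rewrite the left-hand side $m(m(x,x,y),m(z,x,y),y)$ by replacing its outermost first argument with a more useful expression obtained from (H5), and then collapsing the result via (H7) and \eqnref{h5-9}. Semantically the identity expresses the trivial lattice fact that $m(z,x,y)$ already lies below $m(x,x,y) = x\lor y$ in the induced order, so their meet in the filter $(y]$ equals $m(z,x,y)$; the algebraic proof will mirror this.

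First I would establish the auxiliary identity
\[
m(m(x,x,y),m(x,x,y),m(z,x,y)) = m(x,x,y).
\]
This is essentially (H5) after swapping the roles of $y$ and $z$, which gives $m(m(x,z,y),m(x,z,y),m(x,x,y)) = m(x,x,y)$, followed by (H4) on the inner terms to replace $m(x,z,y)$ by $m(z,x,y)$, followed by one use of (H2) at the outermost level to exchange the first two arguments with the third.

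Writing $a = m(x,x,y)$ and $b = m(z,x,y)$ for brevity, the goal becomes $m(a,b,y) = b$, and the auxiliary identity reads $m(a,a,b) = a$. Substituting the right-hand side of the auxiliary identity for the first $a$ in $m(a,b,y)$ yields $m(m(a,a,b),b,y)$. One use of (H2) turns $m(a,a,b)$ into $m(b,b,a)$, and (H4) on the outermost $m$ then gives $m(b,m(b,b,a),y)$. At this point (H7) applies with outer variable $b$, collapsing the inner triple to $m(b,b,y)$. Finally \eqnref{h5-9}, applicable because $b$ already has the form $m(z,x,y)$, completes the computation as $m(b,b,y) = b$.

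The main obstacle is spotting the auxiliary identity: the pattern $m(m(x,x,y),m(x,x,y),m(z,x,y))$ is not directly present in (H5), and only after a variable swap together with two commutations via (H2) and (H4) does the desired shape emerge. Once this observation is made, the remaining manipulations form a routine three-step reduction driven by (H7) and \eqnref{h5-9}.
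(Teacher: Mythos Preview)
Your proof is correct and follows essentially the same route as the paper's: both arguments use the (H5)-instance $m(m(x,z,y),m(x,z,y),m(x,x,y)) = m(x,x,y)$ to introduce a nested expression, apply (H7) to collapse it to $m(m(z,x,y),m(z,x,y),y)$, and finish with \eqnref{h5-9}. The only cosmetic difference is that you first package the (H5)-instance as an auxiliary identity and substitute it into the first argument (requiring an extra (H2) and (H4)), whereas the paper substitutes directly into the second argument.
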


\begin{proof}
We compute
\begin{align*}
m(m(x,x,y),m(z,x,y),y) &\byx{(H4)} m(\underbrace{m(z,x,y)},m(x,x,y),y) \\
&\byx{(H4)} m(m(x,z,y),\underbrace{m(x,x,y)},y) \\
&\byx{(H5)} m(m(x,z,y),m(m(x,z,y),m(x,z,y),m(x,x,y)),y) \\
&\byx{(H7)} m(m(x,z,y),m(x,z,y),y) \\
&\byx{(H2)} m(y,y,m(x,z,y)) \\
&\by{h5-12} m(x,z,y) \\
&\byx{(H4)} m(z,x,y)\,,
\end{align*}
which establishes the desired result.
\end{proof}

\begin{lemma}
\lemlabel{H6}
If $(A,m)$ is an algebra satisfying (H1), (H2), (H4), (H7) and (H8),
then (H6) holds.
\end{lemma}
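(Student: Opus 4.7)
The goal is to prove $m(m(x,x,y),z,y) = m(x,z,y)$. Writing $L$ and $R$ for the two sides, the plan is to derive the pair of join-absorption identities
\[
m(L,L,R) = L \quad \text{and} \quad m(R,R,L) = R,
\]
and then invoke (H2) in the form $m(a,a,b) = m(b,b,a)$ to conclude $L = R$ in a single step.

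Both identities should come from well-chosen substitutions into (H8'), which has the shape $m(m(a,b,c),m(a,b,c),m(a,m(b,d,c),c)) = m(a,b,c)$. For the first, I would take $(a,b,c,d) = (m(x,x,y),z,y,x)$: the leading block $m(a,b,c)$ is $L$ on the nose, while the auxiliary block $m(a,m(b,d,c),c) = m(m(x,x,y),m(z,x,y),y)$ collapses to $m(z,x,y) = R$ by \eqnref{h6-17} followed by (H4). Thus (H8') delivers $m(L,L,R) = L$.

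For the second identity I would take $(a,b,c,d) = (z,x,y,m(x,x,y))$ in (H8'). Then $m(a,b,c) = m(z,x,y) = R$ by (H4). The middle subexpression $m(b,d,c) = m(x,m(x,x,y),y)$ reduces to $m(x,x,y)$ via (H7) specialized at $z \mapsto y$, so $m(a,m(b,d,c),c) = m(z,m(x,x,y),y) = L$ after one use of (H4). Hence (H8') gives $m(R,R,L) = R$, and together with the first identity and (H2) this forces $L = R$.

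The main obstacle is guessing the two correct substitutions into (H8'); once they are in hand, the collapses via \eqnref{h6-17} and (H7) are mechanical, and (H2) symmetrizes the two absorptions into the desired equality.
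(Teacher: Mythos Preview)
Your argument is correct and matches the paper's proof: both expand $L$ via (H8') with $d=x$, collapse the auxiliary block using \eqnref{h6-17}, flip with (H2), and then apply (H8') a second time to land on $R$. The only difference is that for the second application of (H8') the paper takes $d=x$ directly (so the inner block is already $m(x,x,y)$), whereas you take $d=m(x,x,y)$ and then invoke (H7) to reach the same point---an unnecessary detour, but harmless.
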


\begin{proof}
We compute
\begin{align*}
m(m(x,x,y),z,y) &\byx{(H8')}
m(m(m(x,x,y),z,y),m(m(x,x,y),z,y),\underbrace{m(m(x,x,y),m(z,x,y),y)}) \\
&\by{h6-17} m(m(m(x,x,y),z,y),m(m(x,x,y),z,y),m(z,x,y)) \\
&\byx{(H2)} m(m(z,x,y),m(z,x,y),\underbrace{m(m(x,x,y),z,y)}) \\
&\byx{(H4)} m(m(z,x,y),m(z,x,y),m(z,m(x,x,y),y)) \\
&\byx{(H8')} m(z,x,y) \\
&\byx{(H4)} m(x,z,y)\,,
\end{align*}
which establishes (H6).
\end{proof}

The next goal is to verify (H3). We may now use
(H5) and (H6) freely in calculations.

\begin{lemma}
For all $x,y,z,u$,
\begin{align}
m(x,m(y,y,z),y) &= m(x,z,y) \eqnlabel{hick13} \\
m(x,x,m(x,y,z)) &= m(z,z,x) \eqnlabel{hick19} \\
m(x,m(y,z,u),z) &= m(x,u,z) \eqnlabel{hick21} \\
m(x,m(y,y,z),m(y,y,u)) &= m(x,z,m(y,y,u)) \eqnlabel{h3-22}
\end{align}
\end{lemma}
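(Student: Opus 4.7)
The plan is to prove the four identities in the order given, each using the ones before. Semantically, all four express simple nearlattice facts about joins and meets in principal filters; the difficulty is purely in finding the equational chains.

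For \eqnref{hick13}, a short direct computation suffices. Apply (H2) to rewrite $m(y,y,z)$ as $m(z,z,y)$, then (H4) to swap the first two arguments of the outer $m$, putting the expression into the form $m(m(z,z,y),x,y)$, which is exactly the shape to which (H6) applies; a final (H4) finishes the job. A parallel computation (alternatively, a further (H2) on the right) also yields the dual form $m(x,m(y,y,z),z)=m(x,y,z)$, which I will need below.

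For \eqnref{hick19}, which by (H2) is equivalent to $m(x,x,m(x,y,z))=m(x,x,z)$, I expect the main obstacle. My plan is to use (H7) in reverse to rewrite the left side as $m(x,m(x,x,y'),m(x,y,z))$ for a well-chosen $y'$, then to simplify via (H4), (H6), and \eqnref{h5-12} combined with the two forms of \eqnref{hick13}. An alternative route substitutes specific values into (H8') so that the inner $m(x,m(y,u,z),z)$-term collapses via \eqnref{hick13}. Either way, finding the right substitutions is the technical bottleneck, and I would expect the published chain of rewrites to have been found with the aid of an automated prover.

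For \eqnref{hick21}, which generalizes \eqnref{hick13}, the plan is to use (H6) in reverse to rewrite the inner $m(y,z,u)$ as $m(m(y,y,u),z,u)$; now the middle argument is a join-like term $m(y,y,u)\geq u$, and after (H4)-swaps and appeals to \eqnref{hick13} and \eqnref{hick19} the expression should collapse to $m(x,u,z)$. Finally, for \eqnref{h3-22}, note that \eqnref{hick21} does not apply directly, because the outer's third argument $m(y,y,u)$ does not match the inner's second argument $y$. My plan is to re-express the inner $m(y,y,z)$ using (H6) in reverse together with the dual form of \eqnref{hick13}, so that \eqnref{hick21} becomes applicable; this reflects the semantic observation that $m(y,y,u)\geq y$ lets the $y$-factor in $m(y,y,z)=y\lor z$ be absorbed into $m(y,y,u)$.
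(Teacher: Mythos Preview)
Your plan for \eqnref{hick13} is exactly right and matches the paper's proof. The genuine gap is \eqnref{hick19}, and you essentially concede it. Neither of your sketched routes gets off the ground: starting with (H7) in reverse on $m(x,x,m(x,y,z))$ yields $m(x,m(x,x,y'),m(x,y,z))$, but no choice of $y'$ puts this into a shape where (H4), (H6), \eqnref{h5-12}, or \eqnref{hick13} can make progress without circularity. The paper's argument is structured differently: it first applies (H2) to obtain $m(m(x,y,z),m(x,y,z),x)$, then uses (H7) in reverse on the \emph{second} argument, inserting $m(m(x,y,z),m(x,y,z),m(x,x,z))$, which (H5) immediately collapses to $m(x,x,z)$. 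After \eqnref{hick13} and (H4) one reaches $m(z,m(x,y,z),x)$, and a second pass of (H7) --- prepared by \eqnref{h5-9} in reverse and (H2) --- finishes. The ingredient you are missing is (H5): it is the one available identity that directly relates $m(x,y,z)$ to $m(x,x,z)$, and without it there is no way to eliminate the $y$.

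For \eqnref{hick21} and \eqnref{h3-22} your high-level strategy is correct but the tactical moves are not the right ones. For \eqnref{hick21} the paper does not expand the inner term via (H6); instead it uses \eqnref{hick13} in reverse to wrap the inner as $m(z,z,m(z,y,u))$, which is precisely the form on which \eqnref{hick19} fires, reducing it to $m(u,u,z)$; your (H6) step just reproduces an instance of the original identity with a more complicated first argument. For \eqnref{h3-22}, the rewrite of $m(y,y,z)$ that makes \eqnref{hick21} applicable comes from (H7) in reverse, namely $m(y,y,z)=m(y,m(y,y,u),z)$, so that the inner's second argument matches the outer's third; (H6) in reverse and the dual of \eqnref{hick13} do not produce such a match.
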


\begin{proof}
For \eqnref{hick13}, we have
\begin{align*}
m(x,m(y,y,z),y) &\byx{(H2)} m(x,m(z,z,y),y) \\
&\byx{(H4)} m(m(z,z,y),x,y) \\
&\byx{(H6)} m(z,x,y)\\
&\byx{(H4)} m(x,z,y)\,.
\end{align*}

For \eqnref{hick19}, we compute
\begin{align*}
m(x,x,m(x,y,z)) &\byx{(H2)} m(m(x,y,z),m(x,y,z),x) \\
&\byx{(H7)} m(m(x,y,z),\underbrace{m(m(x,y,z),m(x,y,z),m(x,x,z))},x) \\
&\byx{(H5)} m(m(x,y,z),m(x,x,z),x) \\
&\by{hick13} m(m(x,y,z),z,x) \\
&\byx{(H4)} m(z,\underbrace{m(x,y,z)},x) \\
&\by{h5-9} m(z,\underbrace{m(m(x,y,z),m(x,y,z),z)},x) \\
&\byx{(H2)} m(z,m(z,z,m(x,y,z)),x) \\
&\byx{(H7)} m(z,z,x)\,.
\end{align*}

For \eqnref{hick21}, we have
\begin{align*}
m(x,\underbrace{m(y,z,u)},z) &\byx{(H4)} m(x,\underbrace{m(z,y,u)},z) \\
&\by{hick13} m(x,\underbrace{m(z,z,m(z,y,u))},z) \\
&\by{hick19} m(x,m(u,u,z),z) \\
&\byx{(H4)} m(m(u,u,z),x,z) \\
&\byx{(H6)} m(u,x,z)\\
&\byx{(H4)} m(x,u,z)\,.
\end{align*}

Finally, for \eqnref{h3-22}, we have
\begin{align*}
m(x,\underbrace{m(y,y,z)},m(y,y,u)) &\byx{(H7)} m(x,m(y,m(y,y,u),z),m(y,y,u)) \\
&\by{hick21} m(x,z,m(y,y,u))\,.
\end{align*}
This completes the proof.
\end{proof}

\begin{lemma}
\lemlabel{H3}
If $(A,m)$ is an algebra satisfying (H1), (H2), (H4), (H7) and (H8),
then (H3) holds.
\end{lemma}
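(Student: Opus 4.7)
Write $w := m(x,x,y)$ and $v := m(y,y,z)$; in terms of the derived operation $a \lor b := m(a,a,b)$, the identity (H3) is precisely the associativity statement $(x \lor y) \lor z = x \lor (y \lor z)$, i.e.\ $m(w,w,z) = m(x,x,v)$. My plan is to show that both sides equal the common intermediate value $m(w,x,v)$.

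The right-hand side is easy: applying (H7) with $z$ replaced by $v$ gives $m(x, m(x,x,y), v) = m(x,x,v)$, and then (H4) yields $m(w,x,v) = m(x,w,v) = m(x,x,v)$, so RHS $= m(w,x,v)$.

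The left-hand side requires two preparatory moves. First I plan to establish the auxiliary identity $m(w,v,z) = v$: starting from $m(w,v,z) = m(m(x,x,y), m(y,y,z), z)$, one use of (H4) produces $m(m(y,y,z), m(x,x,y), z)$, which by (H6) (with first argument $m(y,y,z)$ and third argument $z$) reduces to $m(y, m(x,x,y), z)$; rewriting $m(x,x,y) = m(y,y,x)$ via (H2) and applying (H7) then collapses this to $m(y,y,z) = v$. Second, using the (H2)-symmetric form $m(u,u,m(u,t,s)) = m(u,u,s)$ of \eqnref{hick19} applied with $(u,t,s) \to (w,v,z)$, together with $m(w,v,z) = v$, I get $m(w,w,z) = m(w,w,m(w,v,z)) = m(w,w,v)$.

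The decisive step is to show $m(w,w,v) = m(w,x,v)$, which morally says that the $y$-part of $w = x \lor y$ is redundant in the middle coordinate when the third coordinate is $v = y \lor z$. This is exactly what \eqnref{h3-22} delivers: the substitution $(x,y,z,u) \to (w, y, x, z)$ turns it into $m(w, m(y,y,x), m(y,y,z)) = m(w, x, m(y,y,z))$, and since $m(y,y,x) = m(x,x,y) = w$ by (H2), the left side becomes $m(w,w,v)$ and the right becomes $m(w,x,v)$. Chaining all the steps together: LHS $= m(w,w,z) = m(w,w,v) = m(w,x,v) = m(x,x,v) =$ RHS. The main obstacle I anticipate is spotting the right intermediate value: straightforward manipulation of either side using (H2), (H4), (H7) alone quickly cycles back on itself, and the creative input is recognizing that \eqnref{h3-22} is precisely the bridge needed to swap $w$ for $x$ in the middle coordinate.
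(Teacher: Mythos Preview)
Your proof is correct, but the route differs from the paper's. The paper argues purely via repeated applications of \eqnref{h3-22}: starting from $m(x,x,v)$ it applies \eqnref{h3-22} (with (H4)) twice to reach $m(w,w,v)$, then uses (H2) to flip to $m(v,v,w)$, applies \eqnref{h3-22} twice more to reach $m(z,z,w)$, and finishes with (H2). So the paper's proof is a uniform four-fold use of one lemma. Your argument instead introduces the pivot $m(w,x,v)$: the equality $m(x,x,v)=m(w,x,v)$ comes directly from (H7), and a single use of \eqnref{h3-22} gives $m(w,x,v)=m(w,w,v)$; for the other half you establish the auxiliary fact $m(w,v,z)=v$ via (H6) and (H7), then invoke \eqnref{hick19} to deduce $m(w,w,z)=m(w,w,v)$. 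Your version draws on a wider slice of the already-derived toolkit ((H6), (H7), \eqnref{hick19}) and uses \eqnref{h3-22} only once; the paper's version is more symmetric and self-contained, needing only \eqnref{h3-22} plus bookkeeping. Both are fine; the paper's is arguably cleaner, while yours illustrates that most of the work is really done once \eqnref{hick19} and \eqnref{h3-22} are in hand.
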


\begin{proof}
We compute
\begin{align*}
m(x,x,m(y,y,z)) &\by{h3-22} m(x,m(y,y,x),m(y,y,z)) \\
&\byx{(H4)} m(m(y,y,x),x,m(y,y,z)) \\
&\by{h3-22} m(m(y,y,x),m(y,y,x),m(y,y,z)) \\
&\byx{(H2)} m(m(y,y,z),m(y,y,z),m(y,y,x)) \\
&\by{h3-22} m(m(y,y,z),z,m(y,y,x)) \\
&\byx{(H4)} m(z,m(y,y,z),m(y,y,x)) \\
&\by{h3-22} m(z,z,\underbrace{m(y,y,x)})\\
&\byx{(H2)} m(z,z,m(x,x,y)) \\
&\byx{(H2)} m(m(x,x,y),m(x,x,y),z)\,.
\end{align*}
This establishes (H3) as claimed.
\end{proof}

Next, we check the independence of the axioms (H1), (H2), (H4), (H7) and (H8).
We found these models using \textsc{Mace4} \cite{McCune}. We simply state
the models and leave the verification that each model has its claimed properties
to the reader.

\begin{example}
\exmlabel{notH8}
An example of an algebra $(S,m)$ satisfying (H1), (H2), (H4) and
(H7), but not (H8) is given by $S =\{0,1,2\}$ with $m:S\times S\times S\to S$ defined
by the following tables.

\begin{table}[htb]
\centering
\begin{tabular}{r|ccc}
$m(0,\cdot,\cdot)$ & 0 & 1 & 2 \\
\hline
0 &  0 & 2 & 2 \\
1 &  0 & 0 & 0 \\
2 &  0 & 2 & 2
\end{tabular}
\quad
\begin{tabular}{r|ccc}
$m(1,\cdot,\cdot)$ & 0 & 1 & 2 \\
\hline
0 &  0 & 0 & 0 \\
1 &  2 & 1 & 2 \\
2 &  2 & 1 & 2
\end{tabular}
\quad
\begin{tabular}{r|ccc}
$m(2,\cdot,\cdot)$ & 0 & 1 & 2 \\
\hline
0 &  0 & 2 & 2 \\
1 &  2 & 1 & 2 \\
2 &  2 & 2 & 2
\end{tabular}
\end{table}
\end{example}

\begin{example}
\exmlabel{notH7}
An example of an algebra $(S,m)$ satisfying (H1), (H2), (H4) and
(H8), but not (H7) is given by $S =\{0,1,2\}$ with $m:S\times S\times S\to S$ defined
by the following tables.

\begin{table}[htb]
\centering
\begin{tabular}{r|ccc}
$m(0,\cdot,\cdot)$ & 0 & 1 & 2 \\
\hline
0 &  0 & 1 & 0 \\
1 &  0 & 1 & 2 \\
2 &  0 & 1 & 2
\end{tabular}
\quad
\begin{tabular}{r|ccc}
$m(1,\cdot,\cdot)$ & 0 & 1 & 2 \\
\hline
0 &  0 & 1 & 2 \\
1 &  1 & 1 & 1 \\
2 &  0 & 1 & 2
\end{tabular}
\quad
\begin{tabular}{r|ccc}
$m(2,\cdot,\cdot)$ & 0 & 1 & 2 \\
\hline
0 &  0 & 1 & 2 \\
1 &  0 & 1 & 2 \\
2 &  0 & 1 & 2
\end{tabular}
\end{table}
\end{example}

\begin{example}
\exmlabel{notH4}
An example of an algebra $(S,m)$ satisfying (H1), (H2), (H7) and
(H8), but not (H4) is given by $S =\{0,1\}$ with $m:S\times S\times S\to S$ defined
by the following tables.

\begin{table}[htb]
\centering
\begin{tabular}{r|cc}
$m(0,\cdot,\cdot)$ & 0 & 1 \\
\hline
0 &  0 & 1  \\
1 &  0 & 1
\end{tabular}
\quad
\begin{tabular}{r|cc}
$m(1,\cdot,\cdot)$ & 0 & 1 \\
\hline
0 &  1 & 1 \\
1 &  1 & 1
\end{tabular}
\end{table}
\end{example}

\begin{example}
\exmlabel{notH2}
An example of an algebra $(S,m)$ satisfying (H1), (H4), (H7) and
(H8), but not (H2) is given by $S =\{0,1\}$ with $m:S\times S\times S\to S$ defined
by the following tables.

\begin{table}[htb]
\centering
\begin{tabular}{r|cc}
$m(0,\cdot,\cdot)$ & 0 & 1 \\
\hline
0 &  0 & 1  \\
1 &  0 & 1
\end{tabular}
\quad
\begin{tabular}{r|cc}
$m(1,\cdot,\cdot)$ & 0 & 1 \\
\hline
0 &  0 & 1 \\
1 &  0 & 1
\end{tabular}
\end{table}
\end{example}

\begin{example}
\exmlabel{notH1}
An example of an algebra $(S,m)$ satisfying (H2), (H4), (H7) and
(H8), but not (H1) is given by $S =\{0,1\}$ with $m:S\times S\times S\to S$ defined
by the following tables.

\begin{table}[htb]
\centering
\begin{tabular}{r|cc}
$m(0,\cdot,\cdot)$ & 0 & 1 \\
\hline
0 &  1 & 1  \\
1 &  1 & 1
\end{tabular}
\quad
\begin{tabular}{r|cc}
$m(1,\cdot,\cdot)$ & 0 & 1 \\
\hline
0 &  1 & 1 \\
1 &  1 & 1
\end{tabular}
\end{table}
\end{example}

Putting together Lemmas \lemref{H5}, \lemref{H6} and \lemref{H3}, along
with Examples \exmref{notH8}, \exmref{notH7}, \exmref{notH4}, \exmref{notH2}
and \exmref{notH1}, we have completed the proof of Theorem \thmref{hickman}.

\section{Axioms of Chajda \emph{et al}}
\seclabel{CH}

In this section we prove Theorem \thmref{CH}. Assume now
that we have an algebra $(A,m)$ satisfying (P1), (P2) and (P4)--(P8).

\begin{lemma}
For all $x,y,z$,
\begin{equation}
\eqnlabel{p11}
m(m(x,y,z),m(x,x,z),y) = m(y,y,z)\,.
\end{equation}
\end{lemma}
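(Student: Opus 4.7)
My plan is to rewrite the left-hand side of \eqnref{p11} so that (P6) becomes applicable. The key observation is that (P7) reads $m(x,x,m(x,y,z)) = m(x,x,z)$; used in reverse, it lets us replace the middle argument $m(x,x,z)$ by $m(x,x,m(x,y,z))$, yielding
\[
m\bigl(m(x,y,z),\, m(x,x,m(x,y,z)),\, y\bigr).
\]
This expression matches the left side of (P6) under the substitution $x \mapsto m(x,y,z)$, $y \mapsto x$, $z \mapsto y$, so applying (P6) collapses it to $m(m(x,y,z), m(x,y,z), y)$.

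After that, the remaining work is entirely formal. Applying (P2) to move the repeated argument to the third slot gives $m(y,y,m(x,y,z))$; applying (P4) inside the innermost $m$ gives $m(y,y,m(y,x,z))$; and a forward application of (P7), now with the roles of $x$ and $y$ exchanged, reduces this to $m(y,y,z)$, which is the right-hand side of \eqnref{p11}.

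The only step requiring any insight is the opening one: recognizing that (P7) should be used backwards to introduce a copy of $m(x,y,z)$ into the inner position, thereby producing the pattern $m(u, m(v,v,u), w)$ that (P6) annihilates. The remaining rewrites are purely mechanical applications of (P2), (P4), and (P7).
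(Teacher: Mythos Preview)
Your proof is correct and follows exactly the same sequence of rewrites as the paper: reverse (P7) to replace $m(x,x,z)$ by $m(x,x,m(x,y,z))$, apply (P6), then (P2), (P4), and finally (P7) forward. There is nothing to add.
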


\begin{proof}
We compute
\begin{align*}
m(m(x,y,z),\underbrace{m(x,x,z)},y) &\byx{(P7)} m(m(x,y,z),m(x,x,m(x,y,z)),y) \\
&\byx{(P6)} m(m(x,y,z),m(x,y,z),y) \\
&\byx{(P2)} m(y,y,\underbrace{m(x,y,z)}) \\
&\byx{(P4)} m(y,y,m(y,x,z)) \\
&\byx{(P7)} m(y,y,z)\,,
\end{align*}
which establishes the claim.
\end{proof}

\begin{lemma}
\lemlabel{P3}
Let $(A,m)$ be an algebra satisfying (P1), (P2) and (P4)--(P8).
Then $(A,m)$ satisfies (P3).
\end{lemma}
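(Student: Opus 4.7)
The plan is to introduce the shorthands $u = m(x,x,y)$ and $v = m(y,y,z)$ and to show that both sides of (P3) reduce to the common expression $m(u,u,v) = m(v,v,u)$ (by (P2)) via two symmetric applications of \eqnref{p11}.

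The crucial preliminary step is a pair of collapse identities, $m(u, y, z) = v$ and $m(v, y, x) = u$. Both follow by using (P4) to pull the inner $m$-expression into the middle slot and then invoking (P6), which telescopes $m(X, m(Y,Y,X), Z)$ to $m(X, X, Z)$. Explicitly,
\[
m(u, y, z) \byx{(P4)} m(y, m(x,x,y), z) \byx{(P6)} m(y,y,z) = v,
\]
and
\[
m(v, y, x) \byx{(P4)} m(y, m(y,y,z), x) \byx{(P2)} m(y, m(z,z,y), x) \byx{(P6)} m(y,y,x) = u,
\]
where the middle (P2) rewrites $m(y,y,z)$ as $m(z,z,y)$ so that the (P6) pattern is visible, and the final equality applies (P2) once more to identify $m(y,y,x)$ with $m(x,x,y) = u$.

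With these collapse identities in hand, I apply \eqnref{p11} twice. Substituting $(x,y,z) \mapsto (y, u, z)$ in \eqnref{p11} yields $m(m(y,u,z), m(y,y,z), u) = m(u,u,z)$; by (P4) and the first collapse, $m(y,u,z) = m(u,y,z) = v$, so the left-hand side becomes $m(v,v,u)$, identifying the LHS of (P3) with $m(v,v,u)$. Symmetrically, substituting $(x,y,z) \mapsto (y, v, x)$ in \eqnref{p11} gives $m(m(y,v,x), m(y,y,x), v) = m(v,v,x)$; the second collapse (together with (P4) and (P2)) reduces both $m(y,v,x)$ and $m(y,y,x)$ to $u$, so the left-hand side equals $m(u,u,v)$, identifying $m(v,v,x) = m(x,x,v)$ with $m(u,u,v)$. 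A last application of (P2) gives $m(v,v,u) = m(u,u,v)$, so the two sides of (P3) coincide.

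The main obstacle is spotting the two substitutions $(y, u, z)$ and $(y, v, x)$ into \eqnref{p11}. The guiding heuristic is that the inner $m(x,y,z)$ appearing on the left of \eqnref{p11} should collapse back to $v$ (respectively $u$) after the substitution, so that the entire left-hand side reduces to $m(v,v,u)$ (respectively $m(u,u,v)$); choosing the first slot to be $y$, so that (P4) aligns it with the middle slot, is exactly what activates (P6) and produces the required collapse.
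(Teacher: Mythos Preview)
Your proof is correct. The two collapse identities $m(u,y,z)=v$ and $m(v,y,x)=u$ follow exactly as you indicate from (P4), (P2) and (P6), and the two substitutions into \eqnref{p11} then give $m(u,u,z)=m(v,v,u)$ and $m(v,v,x)=m(u,u,v)$; one more use of (P2) finishes the argument.

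This is essentially the paper's approach: both arguments show that each side of (P3) equals $m(u,u,v)=m(v,v,u)$, and both hinge on two applications of \eqnref{p11}. The paper packages one of the reductions as the intermediate identity $m(x,x,m(y,y,z))=m(m(y,y,z),m(y,y,z),m(x,x,y))$ and obtains it by first expanding with (P7) before invoking \eqnref{p11}, then applies that identity twice with a variable permutation. Your version instead prepares each \eqnref{p11}-application with a (P6)-based collapse, which is a bit more direct and makes the symmetry between the two halves explicit. Neither approach uses (P1), (P5) or (P8), matching the paper's remark after the lemma.
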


\begin{proof}
First, we have
\begin{align*}
m(x,x,m(y,y,z)) &\byx{(P2)} m(m(y,y,z),m(y,y,z),x) \\
&\byx{(P7)} m(m(y,y,z),m(y,y,z),\underbrace{m(m(y,y,z),m(y,x,z),x)}) \\
&\byx{(P4)} m(m(y,y,z),m(y,y,z),\underbrace{m(m(y,x,z),m(y,y,z),x)}) \\
&\by{p11} m(m(y,y,z),m(y,y,z),m(x,x,y))\,,
\end{align*}
that is,
\begin{equation}
\eqnlabel{halfofit}
m(x,x,m(y,y,z)) = m(m(y,y,z),m(y,y,z),m(x,x,y))\,.
\end{equation}

Now
\begin{align*}
m(x,x,m(y,y,z)) &\by{halfofit} m(m(y,y,z),m(y,y,z),m(x,x,y)) \\
&\byx{(P2)} m(\underbrace{m(x,x,y)},\underbrace{m(x,x,y)},\underbrace{m(y,y,z)}) \\
&\byx{(P2)} m(m(y,y,x),m(y,y,x),m(z,z,y)) \\
&\by{halfofit} m(z,z,\underbrace{m(y,y,x)}) \\
&\byx{(P2)} m(z,z,m(x,x,y)) \\
&\byx{(P2)} m(m(x,x,y),m(x,x,y),z)\,,
\end{align*}
which establishes (P3).
\end{proof}

Note that we used only (P2), (P4), (P6) and (P7) in the proof of (P3).

Next we consider the independence of the axioms (P1), (P2) and (P4)--(P8).
As in the previous section, we simply give the models and leave the verification
that each model has its claimed properties to the reader.

\begin{example}
\exmlabel{notP8}
An example of an algebra $(S,m)$ satisfying (P1), (P2), (P4)--(P7), but not (P8) is given
by $S =\{0,1,2,3\}$ with $m:S\times S\times S\to S$ defined
by the following tables.

\begin{table}[htb]
\centering
\begin{tabular}{lr}
\begin{tabular}{r|cccc}
$m(0,\cdot,\cdot)$ & 0 & 1 & 2 & 3 \\
\hline
0 &  0 & 1 & 1 & 1 \\
1 &  0 & 1 & 1 & 1 \\
2 &  0 & 3 & 2 & 1 \\
3 &  0 & 1 & 1 & 3
\end{tabular}
&
\begin{tabular}{r|cccc}
$m(1,\cdot,\cdot)$ & 0 & 1 & 2 & 3 \\
\hline
0 &  0 & 1 & 1 & 1 \\
1 &  1 & 1 & 1 & 1 \\
2 &  1 & 1 & 2 & 1 \\
3 &  1 & 1 & 1 & 3
\end{tabular}
\\ \\
\begin{tabular}{r|cccc}
$m(2,\cdot,\cdot)$ & 0 & 1 & 2 & 3 \\
\hline
0 &  0 & 3 & 2 & 1 \\
1 &  1 & 1 & 2 & 1 \\
2 &  1 & 1 & 2 & 1 \\
3 &  1 & 1 & 2 & 3
\end{tabular}
&
\begin{tabular}{r|cccc}
$m(3,\cdot,\cdot)$ & 0 & 1 & 2 & 3 \\
\hline
0 &  0 & 1 & 1 & 3 \\
1 &  1 & 1 & 1 & 3 \\
2 &  1 & 1 & 2 & 3 \\
3 &  1 & 1 & 1 & 3
\end{tabular}
\end{tabular}
\end{table}
\end{example}

\begin{example}
\exmlabel{notP7}
An example of an algebra $(S,m)$ satisfying (P1), (P2), (P4)--(P6), (P8), but not (P7) is given
by $S =\{0,1,2,3\}$ with $m:S\times S\times S\to S$ defined
by the following tables.

\begin{table}[htb]
\centering
\begin{tabular}{lr}
\begin{tabular}{r|cccc}
$m(0,\cdot,\cdot)$ & 0 & 1 & 2 & 3 \\
\hline
0 &  0 & 3 & 0 & 3 \\
1 &  0 & 1 & 1 & 3 \\
2 &  0 & 1 & 2 & 3 \\
3 &  0 & 3 & 0 & 3
\end{tabular}
&
\begin{tabular}{r|cccc}
$m(1,\cdot,\cdot)$ & 0 & 1 & 2 & 3 \\
\hline
0 &  0 & 1 & 1 & 3 \\
1 &  3 & 1 & 1 & 3 \\
2 &  0 & 1 & 2 & 3 \\
3 &  3 & 1 & 1 & 3
\end{tabular}
\\ \\
\begin{tabular}{r|cccc}
$m(2,\cdot,\cdot)$ & 0 & 1 & 2 & 3 \\
\hline
0 &  0 & 1 & 2 & 3 \\
1 &  0 & 1 & 2 & 3 \\
2 &  0 & 1 & 2 & 3 \\
3 &  0 & 1 & 2 & 3
\end{tabular}
&
\begin{tabular}{r|cccc}
$m(3,\cdot,\cdot)$ & 0 & 1 & 2 & 3 \\
\hline
0 &  0 & 3 & 0 & 3 \\
1 &  3 & 1 & 1 & 3 \\
2 &  0 & 1 & 2 & 3 \\
3 &  3 & 3 & 3 & 3
\end{tabular}
\end{tabular}
\end{table}
\end{example}

\begin{example}
\exmlabel{notP6}
An example of an algebra $(S,m)$ satisfying (P1), (P2), (P4), (P5), (P7), (P8), but not (P6) is given
by $S =\{0,1,2\}$ with $m:S\times S\times S\to S$ defined
by the following tables.

\begin{table}[htb]
\centering
\begin{tabular}{r|ccc}
$m(0,\cdot,\cdot)$ & 0 & 1 & 2 \\
\hline
0 &  0 & 1 & 0 \\
1 &  0 & 1 & 2 \\
2 &  0 & 1 & 2
\end{tabular}
\quad
\begin{tabular}{r|ccc}
$m(1,\cdot,\cdot)$ & 0 & 1 & 2 \\
\hline
0 &  0 & 1 & 2 \\
1 &  1 & 1 & 1 \\
2 &  0 & 1 & 2
\end{tabular}
\quad
\begin{tabular}{r|ccc}
$m(2,\cdot,\cdot)$ & 0 & 1 & 2 \\
\hline
0 &  0 & 1 & 2 \\
1 &  0 & 1 & 2 \\
2 &  0 & 1 & 2
\end{tabular}
\end{table}
\end{example}

\begin{example}
\exmlabel{notP5}
An example of an algebra $(S,m)$ satisfying (P1), (P2), (P4), (P6)--(P8), but not (P5) is given
by $S =\{0,1,2,3,4\}$ with $m:S\times S\times S\to S$ defined
by the following tables.

\begin{table}[htb]
\centering
\begin{tabular}{r|ccccc}
$m(0,\cdot,\cdot)$ & 0 & 1 & 2 & 3 & 4 \\
\hline
0 &  0 & 0 & 3 & 3 & 0 \\
1 &  0 & 1 & 2 & 3 & 1 \\
2 &  0 & 4 & 2 & 3 & 1 \\
3 &  0 & 0 & 3 & 3 & 0 \\
4 &  0 & 1 & 2 & 3 & 4
\end{tabular}
\quad
\begin{tabular}{r|ccccc}
$m(1,\cdot,\cdot)$ & 0 & 1 & 2 & 3 & 4 \\
\hline
0 &  0 & 1 & 2 & 3 & 1 \\
1 &  0 & 1 & 2 & 3 & 1 \\
2 &  0 & 1 & 2 & 3 & 1 \\
3 &  0 & 1 & 2 & 3 & 1 \\
4 &  0 & 1 & 2 & 3 & 4
\end{tabular}
\quad
\begin{tabular}{r|ccccc}
$m(2,\cdot,\cdot)$ & 0 & 1 & 2 & 3 & 4 \\
\hline
0 &  0 & 4 & 2 & 3 & 1 \\
1 &  0 & 1 & 2 & 3 & 1 \\
2 &  3 & 2 & 2 & 3 & 2 \\
3 &  3 & 2 & 2 & 3 & 2 \\
4 &  0 & 1 & 2 & 3 & 4
\end{tabular}

\begin{tabular}{r|ccccc}
$m(3,\cdot,\cdot)$ & 0 & 1 & 2 & 3 & 4 \\
\hline
0 &  0 & 0 & 3 & 3 & 0 \\
1 &  0 & 1 & 2 & 3 & 1 \\
2 &  3 & 2 & 2 & 3 & 2 \\
3 &  3 & 3 & 3 & 3 & 3 \\
4 &  0 & 1 & 2 & 3 & 4
\end{tabular}
\quad
\begin{tabular}{r|ccccc}
$m(4,\cdot,\cdot)$ & 0 & 1 & 2 & 3 & 4 \\
\hline
0 &  0 & 1 & 2 & 3 & 4 \\
1 &  0 & 1 & 2 & 3 & 4 \\
2 &  0 & 1 & 2 & 3 & 4 \\
3 &  0 & 1 & 2 & 3 & 4 \\
4 &  0 & 1 & 2 & 3 & 4
\end{tabular}
\end{table}
\end{example}

\begin{example}
\exmlabel{notP4}
An example of an algebra $(S,m)$ satisfying (P1), (P2), (P5)--(P8), but not (P4) is given
by $S =\{0,1\}$ with $m:S\times S\times S\to S$ defined
by the following tables.

\begin{table}[htb]
\centering
\begin{tabular}{r|cc}
$m(0,\cdot,\cdot)$ & 0 & 1 \\
\hline
0 &  0 & 1  \\
1 &  0 & 1
\end{tabular}
\quad
\begin{tabular}{r|cc}
$m(1,\cdot,\cdot)$ & 0 & 1 \\
\hline
0 &  1 & 1 \\
1 &  1 & 1
\end{tabular}
\end{table}
\end{example}

\begin{example}
\exmlabel{notP2}
An example of an algebra $(S,m)$ satisfying (P1), (P4)--(P8), but not (P2) is given
by $S =\{0,1\}$ with $m:S\times S\times S\to S$ defined
by the following tables.

\begin{table}[htb]
\centering
\begin{tabular}{r|cc}
$m(0,\cdot,\cdot)$ & 0 & 1 \\
\hline
0 &  0 & 1  \\
1 &  0 & 1
\end{tabular}
\quad
\begin{tabular}{r|cc}
$m(1,\cdot,\cdot)$ & 0 & 1 \\
\hline
0 &  0 & 1 \\
1 &  0 & 1
\end{tabular}
\end{table}
\end{example}

\begin{example}
\exmlabel{notP1}
An example of an algebra $(S,m)$ satisfying (P2), (P4)--(P8), but not (P1) is given
by $S =\{0,1\}$ with $m:S\times S\times S\to S$ defined
by the following tables.

\begin{table}[htb]
\centering
\begin{tabular}{r|cc}
$m(0,\cdot,\cdot)$ & 0 & 1 \\
\hline
0 &  1 & 1  \\
1 &  1 & 1
\end{tabular}
\quad
\begin{tabular}{r|cc}
$m(1,\cdot,\cdot)$ & 0 & 1 \\
\hline
0 &  1 & 1 \\
1 &  1 & 1
\end{tabular}
\end{table}
\end{example}

Putting together Lemma \lemref{P3} with Examples \exmref{notP8}, \exmref{notP7},
\exmref{notP6}, \exmref{notP5}, \exmref{notP4}, \exmref{notP2}
and \exmref{notP1}, we have completed the proof of Theorem \thmref{CH}.

\section{A $2$-Base For Nearlattices}
\seclabel{main}

In this section we prove Theorem \thmref{2base}. We first prove the easy direction.

\begin{lemma}
\lemlabel{easy}
Every nearlattice satisfies (N1) and (N2).
\end{lemma}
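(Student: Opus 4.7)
The plan is to verify both identities directly from the definition $m(x,y,z) = (x\lor z)\land_z(y\lor z)$, taking care about which principal filter each meet operates in.

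For (N1), the definition immediately gives
\[
m(x,y,x) = (x\lor x)\land_x(y\lor x) = x\land_x(y\lor x),
\]
and since $x\leq y\lor x$, both elements lie in the filter $(x]$ and their meet there is $x$.

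For (N2), the strategy is to identify both sides as a common expression of the form $B\lor w$. First I would unfold the inner term appearing on the right-hand side: because $m(x,u,z)\in(z]$ implies $m(x,u,z)\lor z = m(x,u,z)$, one obtains
\[
m(y,m(x,u,z),z) = (y\lor z)\land_z\bigl((x\lor z)\land_z(u\lor z)\bigr),
\]
a triple meet of elements of the lattice $(z]$. By commutativity of $\land_z$ this coincides with the analogous expansion of $m(y,m(u,x,z),z)$; call the common value $B$. The right-hand side of (N2) then reduces to $m(w,w,B) = (w\lor B)\land_B(w\lor B) = B\lor w$.

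For the left-hand side, set $A = m(x,y,z) = (x\lor z)\land_z(y\lor z)$; since $B = A\land_z(u\lor z)$ we have $B\leq A$, whence $B\lor w\leq A\lor w$ with both elements lying in $(w]$. Thus
\[
m(A,B,w) = (A\lor w)\land_w(B\lor w) = B\lor w,
\]
matching the right-hand side. The verification is essentially routine; the only delicate point is careful bookkeeping about which filter's meet is in play at each stage, since the operations $\land_a$ for different $a$ are formally distinct and one must confirm that the elements being combined actually lie in the appropriate filter before invoking commutativity or absorption.
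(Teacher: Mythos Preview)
Your argument is correct and takes a genuinely different route from the paper. The paper verifies (N2) purely equationally, chaining together previously established identities---specifically (H4), (H8'), (P6), and (H2)---to rewrite the left side into the right side without ever unfolding $m$ in terms of $\lor$ and $\land_z$. You instead work semantically, expanding $m$ via \eqnref{ternary-m} and reducing both sides to the concrete element $B\lor w$ in the underlying semilattice. Your approach is more self-contained (it does not rely on the machinery of \S\secref{hickman} and \S\secref{CH}) and makes transparent \emph{why} (N2) holds: the inner terms collapse to a triple $\land_z$-meet, and the comparison $B\leq A$ forces absorption in $(w]$. The paper's approach, by contrast, keeps everything inside the equational calculus, which is consistent with the paper's goal of exhibiting equational interdependencies among axiom systems; it also shows explicitly that (N1) and (N2) are formal consequences of the Hickman and Chajda systems without appeal to the model. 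Your remark about bookkeeping over filters is well taken: the key implicit step is that $B=A\land_z(u\lor z)\leq A$ holds in the global order (since the order on $(z]$ is induced), which is what licenses the monotonicity $B\lor w\leq A\lor w$ and hence the final absorption in $(w]$.
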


\begin{proof}
We are, of course, free to use Hickman's axioms and the axioms of Chajda \emph{et al},
together with any derived properties from the previous two sections.
The identity (N1) is just (P1), so (N2) is the only identity requiring proof:
\begin{align*}
& m(m(x,y,z),m(y,\underbrace{m(u,x,z)},z),w) \\
&\byx{(H4)} m(\underbrace{m(x,y,z)},m(y,m(x,u,z),z),w) \\
&\byx{(H4)} m(m(y,x,z),m(y,m(x,u,z),z),w) \\
&\byx{(H4)} m(m(y,m(x,u,z),z),\underbrace{m(y,x,z)},w) \\
&\byx{(H8')} m(m(y,m(x,u,z),z),\underbrace{m(m(y,x,z),m(y,x,z),m(y,m(x,u,z),z))},w) \\
&\byx{(P6)} m(m(y,m(x,u,z),z),m(y,m(x,u,z),z),w) \\
&\byx{(H2)} m(w,w,m(y,m(x,u,z),z))\,.
\end{align*}
This completes the proof of the lemma.
\end{proof}

\begin{lemma}
\lemlabel{hard1}
Every algebra $(A,m)$ satisfying (N1) and (N2) satisfies
the identities (H2),
\begin{align}
m(m(x,y,y),y,z) &= m(z,z,y) \eqnlabel{useful1} \\
m(x,y,y) &= y\,. \eqnlabel{useful2}
\end{align}
\end{lemma}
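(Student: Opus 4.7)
The plan is to use (N1) as a collapsing device inside the five-variable identity (N2): three well-chosen substitutions should extract \eqnref{useful1}, (H2), and an auxiliary identity that then closes the argument for \eqnref{useful2}.

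First, I would set $y = z$ in (N2). On the left, the middle subterm $m(z,m(u,x,z),z)$ collapses to $z$ by (N1), since its first and third arguments are both $z$; on the right, $m(z,m(x,u,z),z)$ collapses to $z$ for the same reason. What remains, after a relabeling of variables, is $m(m(x,y,y),y,z) = m(z,z,y)$, which is \eqnref{useful1}. Setting $x = y$ in \eqnref{useful1} and using (N1) on $m(y,y,y) = y$ immediately produces (H2).

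The crucial extra ingredient for \eqnref{useful2} is the auxiliary identity $m(x,x,m(y,x,x)) = x$. I would obtain it by specializing (N2) with $u = y$, $z = x$, and $w = x$: on the left, $m(x,y,x) = x$ by (N1), and then the entire outer expression $m(x,\cdot,x)$ also collapses to $x$ by (N1); on the right, $m(y,m(x,y,x),x)$ simplifies to $m(y,x,x)$ after applying (N1) to the inner $m(x,y,x)$. Equating the two sides delivers the auxiliary identity.

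Finally, \eqnref{useful2} follows by a short chain. Write $a := m(y,x,x)$. The version of \eqnref{useful1} obtained by swapping $x$ and $y$ reads $m(a,x,z) = m(z,z,x)$. Setting $z = a$ gives $m(a,x,a) = m(a,a,x)$; the left side is $a$ by (N1), while by (H2) the right side equals $m(x,x,a)$, which equals $x$ by the auxiliary identity. Hence $a = x$, which is \eqnref{useful2}.

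The main obstacle is finding the right substitution for the middle step. The first substitution $y = z$ is the natural first guess, as it maximizes the number of (N1)-collapses inside (N2), and the closing argument is essentially forced once one has $m(x,x,a) = x$ for $a = m(y,x,x)$. The middle substitution $u = y$, $z = x$, $w = x$ is the one that takes trial and error: one needs to land on an identity which, when combined with \eqnref{useful1} and (H2), forces $m(y,x,x) = x$ rather than merely some weaker relation among elements of the form $m(y,x,x)$.
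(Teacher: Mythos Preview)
Your proof is correct and follows essentially the same approach as the paper: the substitution $z=y$ in (N2) to get \eqnref{useful1} and then (H2), followed by the substitution $z=x$, $w=x$ (the paper writes $w=m(x,y,x)$, which is the same by (N1)) to produce $x=m(x,x,m(y,x,x))$, and finally the chain through (H2), \eqnref{useful1}, and (N1) to reach $m(y,x,x)=x$. The only cosmetic difference is that you isolate $m(x,x,m(y,x,x))=x$ as a named auxiliary identity before closing, whereas the paper folds the entire computation into one display.
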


\begin{proof}
First, set $z = y$ in (N2). The left side becomes
\[
m(m(x,y,y),\underbrace{m(y,m(u,x,y),y)},w) \byx{(N1)} m(m(x,y,y),y,w)\,.
\]
The right side becomes
\[
m(w,w,\underbrace{m(y,m(x,u,y),y)}) = m(w,w,y)\,.
\]
The equality of the two sides proves \eqnref{useful1}.

Now (H2) follows easily:
$m(x,x,y) \by{useful1} m(\underbrace{m(y,y,y)},y,x) \byx{(N1)} m(y,y,x)$.

Next, set $z = x$ and $w = m(x,y,x)$ in (N2). The left side becomes
\[
m(m(x,y,x),m(y,m(u,x,x),x),m(x,y,x)) \byx{(N1)} m(x,y,x) \byx{(N1)} x \,.
\]
The right side becomes
\begin{align*}
m(\underbrace{m(x,y,x)},\underbrace{m(x,y,x)},m(y,\underbrace{m(x,u,x)},x))
&\byx{(N1)} m(x,x,m(y,x,x))  \\
&\byx{(H2)} m(m(y,x,x),m(y,x,x),x) \\
&\by{useful1} m(m(y,x,x),x,m(y,x,x)) \\
&\byx{(N1)} m(y,x,x)\,.
\end{align*}
The equality of the two sides proves \eqnref{useful2}.
\end{proof}

\begin{lemma}
For all $x,y,z,u$,
\begin{align}
m(x,m(y,z,u),u) &= m(x,m(z,y,u),u) \eqnlabel{preH4a} \\
m(m(x,y,z),z,u) &= m(u,u,z) \eqnlabel{preH4b} \\
m(x,x,m(y,z,x)) &= m(y,z,x) \eqnlabel{preH4c} \\
m(m(x,y,z),m(y,x,z),z) &= m(x,y,z) \eqnlabel{preH4d} \\
m(m(x,m(y,x,z),z),m(y,x,z),z) &= m(y,x,z) \eqnlabel{preH4e}
\end{align}
\end{lemma}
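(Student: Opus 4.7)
The plan is to derive the five identities in the order \eqnref{preH4b}, \eqnref{preH4c}, \eqnref{preH4a}, \eqnref{preH4d}, \eqnref{preH4e}. The heart of the argument is \eqnref{preH4a}; the other identities either precede it as quick specializations or follow it by short chases through (N1), (N2), (H2), and \eqnref{useful1}--\eqnref{useful2}.

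For \eqnref{preH4b}, I specialize (N2) at $u = z$: by (N1), $m(z,x,z) = z$, so the LHS middle becomes $m(y,z,z) = z$ by \eqnref{useful2}; symmetrically $m(x,z,z) = z$ makes the RHS inner collapse to $z$. What remains is $m(m(x,y,z), z, w) = m(w, w, z)$. Then \eqnref{preH4c} follows by substituting $(x,y,z,u) \mapsto (y, z, x, m(y,z,x))$ into \eqnref{preH4b}: the left side simplifies by (N1) to $m(y,z,x)$, while the right side $m(m(y,z,x), m(y,z,x), x)$ becomes $m(x, x, m(y,z,x))$ by (H2).

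The crux is \eqnref{preH4a}. I relabel (N2) via $(x,y,z,u) \mapsto (z, x, u, y)$, producing
\[
m(m(z,x,u),\; m(x, m(y,z,u), u),\; w) \;=\; m(w,\, w,\, m(x, m(z,y,u), u)).
\]
Write $Q := m(x, m(y,z,u), u)$ and $R := m(x, m(z,y,u), u)$, so this reads $m(m(z,x,u), Q, w) = m(w, w, R)$. Setting $w := Q$ forces the left side to $Q$ by \eqnref{useful2}, giving $Q = m(Q,Q,R)$; by (H2) also $Q = m(R,R,Q)$. Running the same argument after swapping $y$ and $z$ (which interchanges the roles of $Q$ and $R$ and turns $m(z,x,u)$ into $m(y,x,u)$) and setting $w := R$ yields $R = m(R,R,Q) = m(Q,Q,R)$. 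Comparing, $Q = R$.

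With \eqnref{preH4a} in hand, the last two follow cleanly. For \eqnref{preH4d}, apply \eqnref{preH4a} directly with outer first argument $m(x,y,z)$ to swap the inner $y$ and $x$: $m(m(x,y,z), m(y,x,z), z) = m(m(x,y,z), m(x,y,z), z)$. Then (H2) converts the right side to $m(z, z, m(x,y,z))$, and \eqnref{preH4c} collapses this to $m(x,y,z)$. For \eqnref{preH4e}, I first simplify (N2) itself using \eqnref{preH4a} to $m(m(x,y,z), C, w) = m(w, w, C)$ with $C := m(y, m(x,u,z), z)$, then apply this simplified form with $y \mapsto m(y,x,z)$ and $u := y$: the resulting $C = m(m(y,x,z), m(x,y,z), z)$ is reduced by the same \eqnref{preH4a}$+$(H2)$+$\eqnref{preH4c} chain used for \eqnref{preH4d} to $m(y,x,z)$. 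Setting $w := z$ and invoking \eqnref{preH4c} once more finishes the proof. The only real obstacle is discovering the correct relabeling of (N2) for \eqnref{preH4a}; after that, everything is routine.
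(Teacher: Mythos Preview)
Your proof is correct and follows essentially the same approach as the paper. The only cosmetic difference is the order: the paper proves \eqnref{preH4a} first and then \eqnref{preH4b}--\eqnref{preH4c}, whereas you reverse this (which is fine since the arguments are independent); your derivations of each identity match the paper's almost step for step, and your treatment of \eqnref{preH4e} via a preliminary simplification of (N2) using \eqnref{preH4a} is a harmless variant of the paper's direct substitution.
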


\begin{proof}
We compute
\begin{align*}
m(x,m(y,z,u),u) &\by{useful2} m(m(z,x,u),m(x,m(y,z,u),u),m(x,m(y,z,u),u)) \\
&\byx{(N2)} m(m(x,m(y,z,u),u),m(x,m(y,z,u),u),m(x,m(z,y,u),u)) \\
&\byx{(H2)} m(m(x,m(z,y,u),u),m(x,m(z,y,u),u),m(x,m(y,z,u),u)) \\
&\byx{(N2)} m(m(y,x,u),m(x,m(z,y,u),u),m(x,m(z,y,u),u)) \\
&\by{useful2} m(x,m(z,y,u),u)\,,
\end{align*}
which establishes \eqnref{preH4a}

Next, in (N2), set $u = z$. The left side becomes
\begin{align*}
m(m(x,y,z),m(y,\underbrace{m(z,x,z)},z),w) &\byx{(N1)} m(m(x,y,z),\underbrace{m(y,z,z)},w) \\
&\by{useful2} m(m(x,y,z),z,w)\,.
\end{align*}
The right side becomes
\begin{align*}
m(w,w,m(y,\underbrace{m(x,z,z)},z)) &\by{useful2} m(w,w,\underbrace{m(y,z,z)}) \\
&\by{useful2} m(w,w,z)\,.
\end{align*}
The equality of the two sides establishes \eqnref{preH4b}.

Now we have
\begin{align*}
m(x,x,m(y,z,x)) &\byx{(H2)} m(m(y,z,x),m(y,z,x),x) \\
&\by{preH4b} m(m(y,z,x),x,m(y,z,x)) \\
&\byx{(N1)} m(y,z,x)\,,
\end{align*}
which gives \eqnref{preH4c}

Next,
\begin{align*}
m(m(x,y,z),m(y,x,z),z) &\by{preH4a} m(m(x,y,z),m(x,y,z),z) \\
&\byx{(H2)} m(z,z,m(x,y,z)) \\
&\by{preH4c} m(x,y,z)\,,
\end{align*}
which yields \eqnref{preH4d}.

Now in (N2), set $y = m(u,x,z)$ and $w = z$. The left side of (N2) becomes
\begin{align*}
&m(m(x,m(u,x,z),z),\underbrace{m(m(u,x,z),m(u,x,z),z)},z) \\
&\byx{(H2)} m(m(x,m(u,x,z),z),\underbrace{m(z,z,m(u,x,z))},z) \\
&\by{preH4c} m(m(x,m(u,x,z),z),m(u,x,z),z)\,.
\end{align*}
The right side becomes
\begin{align*}
m(z,z,\underbrace{m(m(u,x,z),m(x,u,z),z)})
&\by{preH4d} m(z,z,m(u,x,z)) \\
&\by{preH4c} m(u,x,z)\,.
\end{align*}
The equality of the two sides, along with replacing $u$ with $y$, gives
\eqnref{preH4e}.
\end{proof}

\begin{lemma}
\lemlabel{hard2}
Every algebra $(A,m)$ satisfying (N1) and (N2) satisfies (H4).
\end{lemma}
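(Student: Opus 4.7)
My plan is as follows. Write $a = m(x,y,z)$ and $c = m(y,x,z)$; the target (H4) says $a = c$. I will first extract from \eqnref{preH4d} two absorption facts (the second from the swap $x \leftrightarrow y$):
\[
m(a,c,z) = a \quad\text{and}\quad m(c,a,z) = c.
\]

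Next I will substitute $x \mapsto a$, $y \mapsto c$ into \eqnref{preH4e}. Every copy of $m(y,x,z)$ in that identity becomes $m(c,a,z)$, which collapses to $c$ by the second fact, while the subterm $m(x, m(y,x,z), z)$ sitting in the outer first slot becomes $m(a, c, z)$, which collapses to $a$ by the first fact. Under these two simplifications \eqnref{preH4e} reduces to $m(a,c,z) = c$, which combined with $m(a,c,z) = a$ yields $a = c$, i.e., (H4).

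The hard part will be spotting which identity to substitute into; once the substitution is found the rest is purely mechanical. The reason \eqnref{preH4e} works is that it differs from \eqnref{preH4d} only by an extra $m(x,\cdot,z)$ wrapper on the first argument, and the substitution $x \mapsto a$ is engineered precisely so that the absorption $m(a,c,z) = a$ makes that wrapper vanish. In effect, \eqnref{preH4e} then behaves as a twin of \eqnref{preH4d} whose right-hand side is $c$ instead of $a$, forcing the two to coincide. No further calculation is required beyond these two substitutions.
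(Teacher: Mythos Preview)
Your proof is correct and follows essentially the same route as the paper's own argument. The paper substitutes $x = m(u,v,z)$, $y = m(v,u,z)$ into \eqnref{preH4e} and then repeatedly applies \eqnref{preH4d} to simplify both sides, which is exactly your substitution $x \mapsto a$, $y \mapsto c$ together with the two absorption facts you isolate beforehand; the only difference is that you name $a$ and $c$ explicitly while the paper carries the full terms through the calculation.
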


\begin{proof}
In \eqnref{preH4e}, set $x = m(u,v,z)$ and $y = m(v,u,z)$. The left hand side of
\eqnref{preH4e} becomes
\begin{align*}
& m(m(m(u,v,z),\underbrace{m(m(v,u,z),m(u,v,z),z)},z),\underbrace{m(m(v,u,z),m(u,v,z),z)},z) \\
&\by{preH4d} m(\underbrace{m(m(u,v,z),m(v,u,z),z)},m(v,u,z),z) \\
&\by{preH4d} m(m(u,v,z),m(v,u,z),z) \\
&\by{preH4d} m(u,v,z)\,.
\end{align*}
The right side of \eqnref{preH4e} becomes
\begin{align*}
m(m(v,u,z),m(u,v,z),z) \by{preH4d} m(v,u,z)\,.
\end{align*}
The equality of the two sides, along with replacing $u$ with $x$ and $v$ with $y$,
gives (H4).
\end{proof}

\begin{lemma}
\lemlabel{hard3}
Every algebra $(A,m)$ satisfying (N1) and (N2) satisfies (H7).
\end{lemma}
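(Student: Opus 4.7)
The plan is to deduce (H7) from a single well-chosen substitution into (N2), followed by routine simplifications using the identities already at our disposal. Specifically, I would send (N2)'s $y \mapsto x$, its $z \mapsto y$, and its $u \mapsto y$, keeping $x$ and $w$, to obtain
\[
m(m(x,x,y), m(x, m(y,x,y), y), w) = m(w, w, m(x, m(x,y,y), y))\,.
\]
On the left, $m(y,x,y) = y$ by (N1), so the middle argument reduces to $m(x,y,y) = y$ by \eqnref{useful2}. On the right, two successive applications of \eqnref{useful2} likewise collapse the inner expression to $y$. The displayed identity thus simplifies to
\[
m(m(x,x,y), y, w) = m(w, w, y)\,.
\]

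Applying (H4) to the left side (transposing the first two arguments) and (H2) to the right side yields $m(y, m(x,x,y), w) = m(y, y, w)$. Using (H2) once more to rewrite the middle argument as $m(y,y,x)$, and then renaming $(y, x, w) \mapsto (x, y, z)$, one obtains (H7).

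The only real obstacle is spotting the substitution. Once one observes that the choices $z = y$, $u = y$ in (N2) are precisely those that cause both inner compositions to collapse via (N1) and \eqnref{useful2}, while the outer term $m(x,x,y)$ on the left is preserved, the remainder is bookkeeping with (H2) and (H4).
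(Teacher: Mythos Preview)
Your proof is correct and essentially coincides with the paper's. The paper proceeds via $m(x,m(x,x,y),z)\overset{(H4)}{=}m(m(x,x,y),x,z)\overset{(H2)}{=}m(m(y,y,x),x,z)\overset{\eqnref{preH4b}}{=}m(z,z,x)\overset{(H2)}{=}m(x,x,z)$, whereas you re-derive the relevant instance of \eqnref{preH4b} directly from (N2) (your substitution $y\mapsto x$, $z\mapsto y$, $u\mapsto y$ is precisely the $u=z$ specialization used to prove \eqnref{preH4b}, further restricted to $y=x$). The remaining (H4)/(H2) bookkeeping is identical, so the two arguments differ only in whether \eqnref{preH4b} is cited or inlined.
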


\begin{proof}
We compute
\begin{align*}
m(x,m(x,x,y),z) &\byx{(H4)} m(\underbrace{m(x,x,y)},x,z) \\
&\byx{(H2)} m(m(y,y,x),x,z) \\
&\by{preH4b} m(z,z,x) \\
&\byx{(H2)} m(x,x,z)\,,
\end{align*}
which establishes (H7).
\end{proof}

\begin{lemma}
\lemlabel{hard4}
Every algebra $(A,m)$ satisfying (N1) and (N2) satisfies (H8').
\end{lemma}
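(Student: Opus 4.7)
My plan is to obtain (H8') by a single clever specialization of (N2), followed by a symmetry argument using (H4). The right-hand side of (N2), namely $m(w,w,m(y,m(x,u,z),z))$, already has the shape of the right-hand side of (H8') up to identification of the outer variable $w$. This strongly suggests setting $w = m(x,y,z)$, because then the left-hand side of (N2) acquires the form $m(a,b,a)$ to which (N1) applies.

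First I would substitute $w = m(x,y,z)$ into (N2). The left-hand side becomes $m(m(x,y,z),\, m(y,m(u,x,z),z),\, m(x,y,z))$, which collapses to $m(x,y,z)$ by a single application of (N1). The right-hand side becomes $m(m(x,y,z),\, m(x,y,z),\, m(y,m(x,u,z),z))$. Equating the two yields the intermediate identity
\[
m(x,y,z) \;=\; m\bigl(m(x,y,z),\, m(x,y,z),\, m(y,m(x,u,z),z)\bigr),
\]
valid for all $x,y,z,u$. Comparing with (H8'), which asserts that $m(m(x,y,z),m(x,y,z),m(x,m(y,u,z),z)) = m(x,y,z)$, the only discrepancy is that the roles of $x$ and $y$ inside the innermost ternary expression are interchanged.

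To close that gap I would exploit the universal quantification in the intermediate identity and simply relabel, swapping $x$ and $y$ throughout. This produces $m(y,x,z) = m(m(y,x,z),m(y,x,z),m(x,m(y,u,z),z))$. Now I would invoke (H4), which is already available from Lemma \lemref{hard2}, to rewrite each occurrence of $m(y,x,z)$ as $m(x,y,z)$. What remains is precisely (H8').

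The only step requiring genuine ingenuity is guessing the substitution $w = m(x,y,z)$; everything afterwards is forced. Once that choice is made, (N1) devours the left side of (N2), the right side is already in near-final form, and (H4) plus a relabeling supplies the last cosmetic adjustment. I therefore expect no significant obstacle in the verification.
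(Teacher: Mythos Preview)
Your proof is correct and matches the paper's argument essentially step for step: the paper also substitutes $w = m(x,y,z)$ into (N2), collapses the left side to $m(x,y,z)$ via (N1), and then uses (H4) together with a swap of $x$ and $y$ to reach (H8'). The only cosmetic difference is that the paper applies (H4) before the relabeling rather than after, which is immaterial.
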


\begin{proof}
In (H8'), set $w = m(x,y,z)$. The left side is
\[
m(m(x,y,z),m(y,m(u,x,z),z),m(x,y,z)) \byx{(H2)} m(x,y,z) \byx{(H4)} m(y,x,z)\,.
\]
The right side becomes
\[
m(\underbrace{m(x,y,z)},\underbrace{m(x,y,z)},m(y,m(x,u,z),z)) \byx{(H4)}
m(m(y,x,z),m(y,x,z),m(y,m(x,u,z),z))\,.
\]
The equality of the two sides, along with exchanging the roles of $x$ and $y$,
gives (H8').
\end{proof}

Now we consider the independence of the axioms (N1) and (N2).

\begin{example}
\exmlabel{notN2}
An example of an algebra $(S,m)$ satisfying (N1) but not (N2) is given
by $S =\{0,1\}$ with $m:S\times S\times S\to S$ defined
by the following tables.

\begin{table}[htb]
\centering
\begin{tabular}{r|cc}
$m(0,\cdot,\cdot)$ & 0 & 1 \\
\hline
0 &  0 & 0  \\
1 &  0 & 0
\end{tabular}
\quad
\begin{tabular}{r|cc}
$m(1,\cdot,\cdot)$ & 0 & 1 \\
\hline
0 &  0 & 1 \\
1 &  1 & 1
\end{tabular}
\end{table}
\end{example}

\begin{example}
\exmlabel{notN1}
An example of an algebra $(S,m)$ satisfying (N2) but not (N1) is given
by $S =\{0,1\}$ with $m:S\times S\times S\to S$ defined
by the following tables.

\begin{table}[htb]
\centering
\begin{tabular}{r|cc}
$m(0,\cdot,\cdot)$ & 0 & 1 \\
\hline
0 &  1 & 1  \\
1 &  1 & 1
\end{tabular}
\quad
\begin{tabular}{r|cc}
$m(1,\cdot,\cdot)$ & 0 & 1 \\
\hline
0 &  1 & 1 \\
1 &  1 & 1
\end{tabular}
\end{table}
\end{example}

Putting together Lemmas \lemref{easy}, \lemref{hard1}, \lemref{hard2}, \lemref{hard3}
and \lemref{hard4}, together with Examples \exmref{notN2} and \exmref{notN1},
we have proved Theorem \thmref{2base}.

\section{Problems}
\seclabel{problems}

The following questions arise rather naturally from this investigation.

\begin{problem}
\begin{enumerate}
\item Is there a $2$-base for nearlattices with one axiom no longer than (N1)
and the other shorter than (N2)?
\item Is there a $2$-base for nearlattices involving fewer than five variables?
\item Is the variety of nearlattices $1$-based?
\end{enumerate}
\end{problem}

\begin{acknowledgment}
The investigations in this paper were aided by the automated deduction
tool \textsc{Prover9} and the finite model builder \textsc{Mace4}, both
developed by McCune \cite{McCune}.
\end{acknowledgment}

\end{document}